%DC: Remark 3, 4, corollary 2
 
%!!  definition : \normalfont
%proof: qed

%
\documentclass[runningheads]{llncs}
\usepackage{graphicx}
% Used for displaying a sample figure. If possible, figure files should
% be included in EPS format.
%
% If you use the hyperref package, please uncomment the following line
% to display URLs in blue roman font according to Springer's eBook style:
% \renewcommand\UrlFont{\color{blue}\rmfamily}

\usepackage{enumerate}
\usepackage{xcolor,amssymb,amsmath}

\usepackage{hyperref}
\hypersetup{
    colorlinks=true,
    linkcolor=blue,
    filecolor=blue,
    urlcolor=blue,
    citecolor=blue,
    filecolor=blue
    }

\sloppy

\newcommand\hide[1]\empty

\newcommand\todo[1]{ [~ {\color{red} #1 }]}
\renewcommand\todo[1]\empty

\newcommand\improve[1]{ [~ {\color{green} #1 }]}
\renewcommand\improve[1]\empty
\newcommand\obsolete[1]{{\color{lightgray}\noindent{\bf Obsolete:} {#1}}}
\renewcommand\obsolete[1]\empty
\newcommand\later[1]{{\color{green}\noindent{\bf later:} \textit{#1}}}
\renewcommand\later[1]\empty

\newcommand\C[1]{\chi(#1)}
\newcommand\CF[1]{\chi^{\scriptscriptstyle >}_{#1}}

\newcommand\diff[1]{{#1}_{\neq}}
\newcommand\dunr[1]{{#1}^{(\neq)}}

\newcommand\framets[1]{#1}

\def\frF{\framets{F}}

\def\mM{\framets{M}}

\def\clF{\mathcal{F}}
\def\clD{\mathcal{D}}
\def\clG{\mathcal{G}}
\def\Log{\myoper{Log}}
\newcommand\myoper[1]{\mathop{\myopts{#1}}}
\newcommand\myopts[1]{\mathrm{#1}}

\def\Di{\lozenge}

\def\imp{\rightarrow}

\newcommand\logicts[1]{{\textsc{#1}}}

\def\vL{L}

\def\clA{\mathcal{A}}

\def\clB{\mathcal{B}}
\def\clC{\mathcal{C}}
\def\clD{\mathcal{D}}

\def\clP{\mathcal{P}}
\def\EE{\exists}
\def\AA{\forall}

%\newcommand\Alg{\myoper{Alg}}
% all classes
%\def\v{\mathcal{v}}
\def\v{\theta}

% all defective classes;

\def\vf{\varphi}
\def\mo{\vDash}
\def\vd{\vdash}
\def\con{\wedge}

\def\emp{\varnothing}
\def\SubFrs{\myoper{Sub}}

\def\se{\subseteq}
\newcommand\languagets[1]{\logicts{#1}}

\def\PV{\languagets{PV}}
\def\wK4{\logicts{wK4}}

\def\LB{\LK{B}}

\newcommand{\ff}[1]{\widehat{#1}}
\newcommand\scheme[1]{ {[ #1 ]}}
\newcommand{\LK}[1]{\logicts{K#1}}
\newcommand\lng[1]{{\ell(#1)}}
\def\ACon{\logicts{Con}}
\def\PF{\logicts{P}}
\def\Did{\langle\neq\rangle}
\def\Boxd{ {[ \neq ]} }

\begin{document}
\title{Decidability of modal logics of non-$k$-colorable graphs}

%
%\titlerunning{Abbreviated paper title}
% If the paper title is too long for the running head, you can set
% an abbreviated paper title here
%
\author{Ilya Shapirovsky\inst{1}\orcidID{0000-0001-7434-5894}}
\authorrunning{Ilya Shapirovsky}
% First names are abbreviated in the running head.
% If there are more than two authors, 'et al.' is used.
%
\institute{New Mexico State University, USA\\
\email{ilshapir@nmsu.edu}}
\maketitle              % typeset the header of the contribution
\begin{abstract}
We consider the bimodal language, where the first modality is interpreted
by a binary relation in the standard way, and the second is interpreted by the relation of inequality.
It follows from Hughes (1990), that in this language, non-$k$-colorability of a graph is expressible for every finite $k$. We show that modal logics of classes of non-$k$-colorable graphs (directed or non-directed), and some of their extensions, are decidable.

\keywords{chromatic number \and
modal logic \and
difference modality \and
decidability \and
finite model property \and
filtration }
\end{abstract}
\section{Introduction}

It is known that a non-$k$-colorability of a graph can be expressed by
propositional modal formulas \cite{Hughes1990}. In \cite{GHV04}, such formulas were used
to construct a canonical logic which cannot be determined by a first-order definable class of relational structures; this gave a solution
of a long-standing problem by Fine \cite{FineCanonicalFO1975}.

In this paper, we are interested in decidability of modal logics given by axioms of non-$k$-colorability, and some of their extensions.  We consider the bimodal language, where the first modality is interpreted
by a binary relation in the standard way, and the second (difference modality) is interpreted by the
relation of inequality.

The paper has the following structure. Section \ref{sec:prel} provides preliminary
syntactic and semantic facts.  In Section \ref{sec:chrom},  the finite model property and decidability are shown for logics of non-$k$-colorable graphs. In Section \ref{sec:conn}, these results are obtained for the  connected non-directed case. Further results on the finite model property
of logics of non-$k$-colorable graphs
are obtained in Section \ref{sec:coroll}. A discussion is given in Section \ref{sec:concl}.

\section{Preliminaries}\label{sec:prel}

We assume that the reader is familiar with basic notions in modal logic (see, e.g., \cite{CZ,BRV-ML} for the references). Below we briefly remind some of them.\later{redo}

\subsubsection{Modal syntax and relational semantics.}
The set of {\em $n$-modal formulas} is built from
a countable set of {\em variables} $\PV=\{p_0,p_1,\ldots\}$ using Boolean connectives $\bot,\imp$ and unary connectives $\Di_i$, $i< n$ ({\em modalities}).
Other logical connectives
are defined as abbreviations in the standard way, in particular $\Box_i\vf$
denotes $\neg \Di_i \neg \vf$.

An {\em $n$-frame} is a structure  $\frF=(X,(R_i)_{i<n})$,
where $X$ is a non-empty set and ${R_i\se X{\times}X}$ for $i<n$.
A {\em valuation in a frame $F$} is a map $\PV\to \clP(X)$,
where $\clP(X)$ is the set of all subsets of $X$.
A {\em (Kripke) model on} $\frF$ is a pair $(\frF,\theta)$, where $\theta$ is a valuation.
The \emph{truth} of formulas in models is defined in the usual way:
\begin{itemize}
    \item $M, x \models p_i$ iff $x \in \v(p_i)$;
    \item $M, x \not\models \bot$;
    \item $M, x \models \varphi \to \psi$ iff $M, x \models \varphi$ implies $M, x \models \psi$;
    \item $M, x \models \Di_i \varphi$ iff there exists $y$ such that $xR_i y$ and  $M, y \models \varphi$.
\end{itemize}
\todo{Either ??}
A formula $\vf$ is {\em true in a model $\mM$}, in symbols $M\mo\vf$, if $\mM,x\mo\vf$ for all $x$ in $\mM$.
A formula $\vf$ is {\em valid in a frame $\frF$}, in symbols $\frF\mo\vf$,
if $\vf$ is true in every model on $\frF$.
For a class $\clC$ of structures (frames or models) and a set of formulas $\Phi$, we write
$\clC\mo\Phi$, if  $S\mo \vf$ for all $S\in\clC$ and $\vf\in \Phi$.

For the standard notions of {\em generated} and {\em point-generated}  {\em subframe} and {\em submodel}, and {\em p-morphism}, we refer the reader to \cite[Section 3.3]{CZ} or
\cite[Sections 2.1 and 3.3]{BRV-ML}.

\subsubsection{Modal logics.}
A ({\em propositional normal $n$-modal}) {\em logic} is a set $\vL$ of $n$-modal formulas
that contains all classical tautologies, the axioms $\neg\Di_i \bot$  and
$\Di_i  (p_0\vee p_1) \imp \Di_i  p_0\vee \Di_i  p_1$ for each $i<n$, and is closed under the rules of modus ponens,
substitution and {\em monotonicity}; the latter means that for each $i<n$, $\vf\imp\psi\in \vL$ implies $\Di_i \vf\imp\Di_i \psi\in \vL$.\footnote{
For this version of the definition of normal modal logic, see, e.g.,
\cite[Remark 4.7]{BRV-ML}.}
We write $\vL\vd \vf$ for $\vf\in L$. For a set $\Phi$ of $n$-modal formulas,
$\vL+\Phi$ is the smallest normal logic containing $\vL\cup\Phi$.
For a formula $\vf$, $L+\vf$ abbreviates $L+\{\vf\}$.
$\LK{}$ denotes the smallest unimodal logic.

An {\em $L$-frame} is a frame where $L$ is valid.

For a class $\clC$ of $n$-frames, the set of $n$-modal formulas
$\vf$ such that $\clC\mo\vf$ is called the {\em logic of $\clC$}
and is denoted by $\Log{\clC}$. It is straightforward that $\Log{\clC}$ is a  normal logic.
Such logics are called {\em Kripke complete}.
A logic has the {\em finite model property} (fmp),
if it is the logic of a class of finite frames (by the cardinality of a frame or model we mean the cardinality of its domain).
We say that $L$ has the {\em exponential fmp}, if
for every formula $\vf\notin L$, $\vf$ is falsified in an $L$-frame of cardinality $\leq 2^\lng{\vf}$,
where $\lng{\vf}$ is the number of subformulas of $\vf$.

The {\em canonical model $M_L=(X_L,(R_{i,L})_{i<n},\v_L)$ of} $\vL$ is built from
maximal $\vL$-consistent sets $X_L$ of $n$-modal formulas; the canonical relations and the valuation are defined in the standard way. Namely, for $\Gamma,\Delta\in X_L$,
put $(\Gamma,\Delta) \in R_{i,L}$, if $\{\Di_i\vf \mid \vf\in \Delta\}\se \Gamma$, and set
$\v_L(p)=\{\Gamma\in X_L\mid p\in \Gamma\}$ for $p\in\PV$.  The following fact is well known, see e.g., \cite[Chapter 4.2]{BRV-ML}.
\begin{proposition}\label{prop:k-canonical-model}[Canonical model theorem]
%Let $\mM$ be the  canonical model of a logic $\vL$. Then
%for all $n$-modal formulas $\vf$ we have:
%\begin{enumerate}[\normalfont (1)]
%\item
%$\mM,a\mo \vf \tiff \vf \in a$, for all $a$ in $\mM$;
%\item
$\vL\vd\vf$ iff $M_L\mo\vf$.
\end{proposition}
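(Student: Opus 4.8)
The plan is to prove the standard Truth Lemma for the canonical model $M_L$ and then read off both directions of the equivalence. First I would record the basic properties of the carrier $X_L$: by the usual Lindenbaum argument, every $L$-consistent set of formulas extends to a maximal $L$-consistent set, and each $\Gamma\in X_L$ is deductively closed, contains every theorem of $\vL$, and satisfies $\neg\vf\in\Gamma$ iff $\vf\notin\Gamma$, as well as $\vf\imp\psi\in\Gamma$ iff ($\vf\in\Gamma$ implies $\psi\in\Gamma$). These facts handle all the Boolean bookkeeping in the induction below.

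The heart of the argument is the \emph{Truth Lemma}: for every formula $\vf$ and every $\Gamma\in X_L$, one has $M_L,\Gamma\mo\vf$ iff $\vf\in\Gamma$. I would prove this by induction on the structure of $\vf$. The cases of $\bot$, variables (using the definition of $\v_L$), and $\imp$ follow immediately from the properties of maximal consistent sets. The only substantive case is $\vf=\Di_i\psi$, and this is where the definition of $R_{i,L}$ enters.

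For the modal step the key fact is the \emph{Existence Lemma}: $\Di_i\psi\in\Gamma$ iff there is $\Delta\in X_L$ with $(\Gamma,\Delta)\in R_{i,L}$ and $\psi\in\Delta$. The right-to-left direction is immediate from the definition of $R_{i,L}$. The main obstacle is the left-to-right direction: assuming $\Di_i\psi\in\Gamma$, I must produce a suitable successor. The standard move is to show that the set $\{\psi\}\cup\{\chi \mid \Box_i\chi\in\Gamma\}$ is $L$-consistent, then extend it by Lindenbaum to a maximal consistent $\Delta$, and check that $\Gamma R_{i,L}\Delta$ and $\psi\in\Delta$. Proving consistency is the delicate point: were it to fail, $\vL$ would prove $(\chi_1\con\dots\con\chi_m)\imp\neg\psi$ for finitely many $\chi_j$ with $\Box_i\chi_j\in\Gamma$; contraposing and applying the rule of monotonicity for $\Di_i$ together with the additivity axiom $\Di_i(p_0\vee p_1)\imp\Di_i p_0\vee\Di_i p_1$ would give $\vL\vd\Di_i\psi\imp(\Di_i\neg\chi_1\vee\dots\vee\Di_i\neg\chi_m)$. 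Since $\Di_i\psi\in\Gamma$, maximality then forces some $\Di_i\neg\chi_j=\neg\Box_i\chi_j\in\Gamma$, contradicting $\Box_i\chi_j\in\Gamma$. I would verify that the minimal normal modal apparatus guaranteed by the definition of a logic (the additivity axiom, $\neg\Di_i\bot$, and monotonicity) is exactly what is needed to run this derivation. Granting the Existence Lemma, the modal case closes: $M_L,\Gamma\mo\Di_i\psi$ iff some $R_{i,L}$-successor satisfies $\psi$, iff (by the induction hypothesis) some successor contains $\psi$, iff $\Di_i\psi\in\Gamma$.

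Finally I would deduce the proposition from the Truth Lemma. If $\vL\vd\vf$, then $\vf$ belongs to every $\Gamma\in X_L$, so $M_L,\Gamma\mo\vf$ for all $\Gamma$, i.e. $M_L\mo\vf$. Conversely, if $\vL\not\vd\vf$, then $\{\neg\vf\}$ is $L$-consistent and extends to some $\Gamma\in X_L$ with $\vf\notin\Gamma$; the Truth Lemma gives $M_L,\Gamma\not\mo\vf$, so $M_L\not\mo\vf$. This establishes both directions of the stated equivalence.
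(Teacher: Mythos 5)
Your proposal is correct and follows essentially the same route as the paper, which does not prove this proposition itself but cites the standard argument (Lindenbaum extension, Truth Lemma by induction, Existence Lemma for the modal step) from the literature. You also correctly adapt that argument to the paper's diamond-based setup — using monotonicity, the additivity axiom, and $\neg\Di_i\bot$ in place of necessitation and the K axiom, and matching the paper's definition of $R_{i,L}$ via $\{\Di_i\vf\mid\vf\in\Delta\}\se\Gamma$ — so nothing further is needed.
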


$L$ is {\em canonical}, if $L$ is valid in its {\em canonical frame} $F_L=(X_L,(R_{i,L})_{i<n})$.
A formula $\vf$ is {\em canonical}, if $F_L\mo \vf$ whenever $\vf\in L$.

\begin{proposition}\label{prop:canon-gener}
Let $L$ be a canonical $n$-modal logic. Then for any $n$-modal logic $L'\supseteq L$,
we have $F_{L'}\mo L$.
\end{proposition}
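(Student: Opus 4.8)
The plan is to prove that the canonical frame $F_{L'}$ of $L'$ is a \emph{generated subframe} of the canonical frame $F_L$ of $L$, and then to conclude from the canonicity of $L$ together with the fact that validity of modal formulas is inherited by generated subframes. Thus the statement reduces to a structural comparison of the two canonical frames.

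First I would check that $X_{L'}\se X_L$. Since $L\se L'$, every $L'$-consistent set is $L$-consistent; and any maximal $L'$-consistent set $\Gamma$ is a complete theory (it contains $\vf$ or $\neg\vf$ for each formula $\vf$), so it is in particular maximal $L$-consistent. Next, because the canonical accessibility relation is defined by the logic-independent condition $\{\Di_i\vf\mid\vf\in\Delta\}\se\Gamma$, the relation $R_{i,L'}$ is exactly the restriction of $R_{i,L}$ to $X_{L'}$; no separate verification of the relations is needed.

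The key step is to verify that $X_{L'}$ is closed under each $R_{i,L}$, i.e.\ that $\Gamma\in X_{L'}$ and $\Gamma R_{i,L}\Delta$ with $\Delta\in X_L$ force $\Delta\in X_{L'}$. Here I would use the dual characterization $\Gamma R_{i,L}\Delta \tiff \{\chi \mid \Box_i\chi\in\Gamma\}\se\Delta$, whose equivalence with the defining condition follows from the maximality of $\Gamma$ and $\Delta$. Since $L'$ is a normal logic it is closed under necessitation (derivable from monotonicity and $\neg\Di_i\bot$), so each $\psi\in L'$ gives $\Box_i\psi\in L'\se\Gamma$ and hence $\psi\in\Delta$; thus $L'\se\Delta$. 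As $\Delta$ is a complete, $L$-consistent theory containing $L'$, it cannot be $L'$-inconsistent: a refutation $\neg(\psi_1\con\ldots\con\psi_n)\in L'\se\Delta$ with each $\psi_j\in\Delta$ would render $\Delta$ propositionally inconsistent. Hence $\Delta\in X_{L'}$.

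Combining these observations, $F_{L'}$ is a generated subframe of $F_L$. Since $L$ is canonical we have $F_L\mo L$, and validity transfers to generated subframes, whence $F_{L'}\mo L$, as required. I expect the closure step of the previous paragraph to be the main obstacle, as it is the only place where the normality of $L'$ — specifically its closure under necessitation — is genuinely used; everything else is routine bookkeeping about maximal consistent sets.
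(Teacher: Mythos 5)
Your proposal is correct and follows exactly the route the paper takes: the paper's proof consists precisely of the observation that $F_{L'}$ is a generated subframe of $F_L$, from which the claim follows by canonicity of $L$ and preservation of validity under generated subframes. You have simply filled in the routine details (inclusion of the point sets, restriction of the canonical relations, and closure of $X_{L'}$ under $R_{i,L}$ via necessitation), all of which are sound.
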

This fact is well known and follows from a simple observation that
$F_{L'}$ is a generated subframe of $F_L$.

\subsubsection{Logics with the difference modality.}
It is known that adding the difference modality allows to
increase the expressive power of propositional
 modal language (see, e.g.,  \cite{DeRijkeDiff}, \cite{GargovGorankoDiff1993} in the relational context, or \cite{Kudinov2014} for topological semantics).
 \todo{Read Goranko: algebraic? topological - more refs}

Is this paper we will consider bimodal ($n=2$) and unimodal ($n=1$) languages.
We write $\Di$ for $\Di_0$, and
$\Did$ for $\Di_1$; likewise for boxes.
We also use abbreviations $\EE\vf$ for $\Did \vf\vee \vf$ and
$\AA\vf$ for $\Boxd \vf\wedge \vf$.

For a unimodal frame $F=(X,R)$, let
$\diff{F}$ be the bimodal frame $(X,R,\neq_X)$,
where $\neq_X$ is the inequality relation on $X$, i.e.,  the set of pairs $(x,y)\in X\times X$ such that $x\neq y$. For a class $\clF$ of frames,  put $\diff{\clF}=\{\diff{F}\mid F\in\clF\}$

For a unimodal logic $L$, let $\diff{L}$
be the smallest bimodal logic that contains $L$ and the following formulas:
\begin{equation}\label{eq:diff-axioms}
p\imp \Boxd\Did  p, \quad \Did \Did p\imp \EE p, \quad
\Di p\imp \EE p.
\end{equation}
Recall that the validity of $p\imp \Boxd\Did  p$ in a frame $(X,R,D)$
expresses that $D$ is symmetric,  the formula $\Did \Did p\imp \EE p$ means that the relation
$D\cup  Id_X$ is transitive ($Id_X$ denotes the diagonal relation on $X$),  and the
formula $\Di p\imp \EE p$ expresses that $R\subseteq D\cup Id_X$; see, e.g., \cite{DeRijkeDiff} for details.

In particular, it follows that we have the following characterization of bimodal
point-generated frames that validate $\diff{\LK{}}$:
\begin{proposition}\label{prop:point-gen-diff}
$F=(X,R,D)$ is a point-generated $\diff{\LK{}}$-frame iff $\;{\neq}_X \,\subseteq \,D$.
\end{proposition}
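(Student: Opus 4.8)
The plan is to prove both directions of the biconditional separately, working with the frame conditions imposed by the three axioms in \eqref{eq:diff-axioms}.

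For the $(\Leftarrow)$ direction, suppose $F=(X,R,D)$ satisfies ${\neq}_X\subseteq D$. I would first check that $F$ is a $\diff{\LK{}}$-frame, i.e.\ that each of the three axioms is valid on $F$. Since $F$ is point-generated and the generating point can reach everything, and since every distinct pair lies in $D$, I would show $D$ must in fact equal ${\neq}_X$: the symmetry axiom $p\imp\Boxd\Did p$ forces $D$ symmetric, the transitivity-type axiom $\Did\Did p\imp\EE p$ forces $D\cup Id_X$ transitive, and the inclusion axiom $\Di p\imp\EE p$ forces $R\subseteq D\cup Id_X$. The key point is that if $D$ properly contained ${\neq}_X$ it would have to contain some reflexive pair $(x,x)$; I would argue this is compatible with the axioms only in degenerate ways, but in fact the cleaner route is to observe that ${\neq}_X\subseteq D$ together with $D\subseteq X\times X$ already makes $D$ a superset of inequality, and then verify directly that the three axioms hold (symmetry and the transitivity of $D\cup Id_X=X\times X$ are immediate once ${\neq}_X\subseteq D$, and $R\subseteq D\cup Id_X=X\times X$ is trivial). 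Hence $F$ is a $\diff{\LK{}}$-frame, and point-generation is assumed.

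For the $(\Rightarrow)$ direction, suppose $F=(X,R,D)$ is a point-generated $\diff{\LK{}}$-frame, say generated from a point $x_0$. I would extract the frame conditions from Proposition-level facts already recalled in the excerpt: $D$ is symmetric, $D\cup Id_X$ is transitive, and $R\subseteq D\cup Id_X$. The goal is ${\neq}_X\subseteq D$, i.e.\ any two distinct points are $D$-related. The main work is showing that point-generation plus these conditions forces $D\cup Id_X$ to be the total relation $X\times X$, from which ${\neq}_X\subseteq D$ follows (a pair $(x,y)$ with $x\neq y$ lies in $D\cup Id_X$ but not in $Id_X$, hence in $D$).

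The crux, and the step I expect to be the main obstacle, is the reachability argument: I must show that from the generating point $x_0$, every point of $X$ is reachable via the combined relation $R\cup D$, and then convert such reachability into $D$-reachability. Since $F$ is point-generated by $x_0$ in the bimodal sense, every $y\in X$ is connected to $x_0$ by a finite path alternating $R$- and $D$-steps. Using $R\subseteq D\cup Id_X$, every $R$-step can be replaced by a $D$-step or a trivial loop, so every point is reachable from $x_0$ by $(D\cup Id_X)$-steps; since $D\cup Id_X$ is transitive and reflexive (reflexive because of $Id_X$, and $x_0\,(D\cup Id_X)\,x_0$ trivially), we get $x_0\,(D\cup Id_X)\,y$ for every $y$. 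Finally, symmetry of $D$ (hence of $D\cup Id_X$) upgrades this to full connectivity: for any $x,y$ we have $x\,(D\cup Id_X)\,x_0$ and $x_0\,(D\cup Id_X)\,y$, and transitivity yields $x\,(D\cup Id_X)\,y$. Thus $D\cup Id_X=X\times X$, giving ${\neq}_X\subseteq D$ as required. The delicate point to handle carefully is ensuring the alternating path really collapses to $D\cup Id_X$-steps and that the reflexive/symmetric/transitive closure manipulations are all licensed by the three axiom-induced conditions and nothing more.
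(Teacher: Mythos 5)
The paper offers no proof of this proposition (it is declared ``straightforward''), so the comparison is against the intended folklore argument; your proposal is essentially that argument, and its core is correct. In particular your $(\Rightarrow)$ direction --- the only direction the paper later uses --- is sound: validity of the three axioms yields the first-order conditions recalled in the text (symmetry of $D$, transitivity of $D\cup Id_X$, and $R\subseteq D\cup Id_X$); point-generation gives a finite $(R\cup D)$-path from the root $x_0$ to each point; $R\subseteq D\cup Id_X$ collapses such a path into a $(D\cup Id_X)$-path; transitivity (plus reflexivity) of $D\cup Id_X$ then gives $x_0\,(D\cup Id_X)\,y$ for every $y$; and symmetry together with transitivity upgrades this to $x\,(D\cup Id_X)\,y$ for all $x,y$, whence $xDy$ whenever $x\neq y$.

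Two points need repair in your $(\Leftarrow)$ direction. First, you close with ``point-generation is assumed,'' but in this direction point-generation is part of the conclusion, not a hypothesis: the left-hand side of the ``iff'' is the conjunction ``point-generated \emph{and} $\diff{\LK{}}$-frame.'' The fix is one line: if ${\neq}_X\subseteq D$, then every point $D$-sees all other points, so any single point generates the whole frame. Second, your passing claim that ``$D$ must in fact equal ${\neq}_X$'' is false, and you were right to abandon it: $D=X\times X$ satisfies ${\neq}_X\subseteq D$ together with all three frame conditions, and point-generated $\diff{\LK{}}$-frames can genuinely contain $D$-reflexive points --- that is precisely why the paper introduces the repairing $\dunr{F}$ immediately after this proposition. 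With these two repairs, your proof is complete and matches what the paper leaves to the reader.
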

The formulas  \eqref{eq:diff-axioms} are Sahlqvist formulas, and hence are canonical (see, e.g.,  \cite[Theorem 10.30]{CZ}).
In particular, it follows that $\diff{\LK{}}$ is Kripke complete. It is well-known that this logic has the finite model property: for every non-theorem $\vf$ of $\diff{\LK{}}$, consider a submodel $M$
of the canonical model of $\diff{\LK{}}$ generated by a point $x$ where $\vf$ is refuted, and take a filtration of $M$.
%It is well known $\diff{\LK{}}$ is the logic of the class of all (finite) frames of form $(X,R,\neq_X)$.
\begin{proposition}[\cite{DeRijkeDiff}]\label{prop:DeRijkeDiff}
$\diff{\LK{}}$ is the logic of the class of all (finite) frames of the form $(X,R,\neq_X)$.
\end{proposition}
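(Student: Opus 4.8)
The plan is to prove both inclusions between $\diff{\LK{}}$ and the logic of the relevant class. Let $\clK$ be the class of all frames $(X,R,\neq_X)$ and $\clK_{\mathrm{fin}}$ the class of its finite members. The first (soundness) inclusion $\diff{\LK{}}\se\Log{\clK}$ is routine: when the second relation is literally ${\neq}_X$, it is symmetric, $D\cup Id_X={\neq}_X\cup Id_X=X\times X$ is transitive, and trivially $R\se D\cup Id_X$, so each of the three formulas in \eqref{eq:diff-axioms} is valid on $(X,R,\neq_X)$ by the frame-correspondences recalled after that display; together with the validity of $\LK{}$ this gives $(X,R,\neq_X)\mo\diff{\LK{}}$, hence the inclusion (a fortiori for the subclass $\clK_{\mathrm{fin}}$).

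For the converse I would prove completeness together with the finite model property by filtration, following the outline sketched before the proposition. Let $\vf\notin\diff{\LK{}}$. By Proposition~\ref{prop:k-canonical-model} there is a point $\Gamma$ of the canonical model $M_{\diff{\LK{}}}$ with $M_{\diff{\LK{}}},\Gamma\not\mo\vf$. Pass to the submodel $M=(X,R,D,\theta)$ generated by $\Gamma$. Since the axioms \eqref{eq:diff-axioms} are Sahlqvist, hence canonical, the underlying frame is a $\diff{\LK{}}$-frame, and being point-generated it satisfies ${\neq}_X\se D$ by Proposition~\ref{prop:point-gen-diff}. (Concretely $D\cup Id_X$ is an equivalence relation and $R\se D\cup Id_X$, so the generated submodel lies inside a single $D\cup Id_X$-class, on which distinct points are $D$-related.) Now fix the finite subformula-closed set $\Sigma$ of subformulas of $\vf$ and the minimal filtration equivalence $x\sim y \iff (\forall\psi\in\Sigma)(M,x\mo\psi \Leftrightarrow M,y\mo\psi)$, with quotient $W=X/{\sim}$.

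The main obstacle is that a bare filtration only yields a filtrated relation $\bar D$ on $W$, whereas I must deliver a frame whose second relation is \emph{genuine} inequality. Two phenomena block the naive choice $\bar D={\neq}_W$: a class may have to witness some $\Did\psi\in\Sigma$ \emph{internally} because all $\psi$-points lie in that one class but it contains at least two of them, and the canonical $D$ need not be irreflexive, so a class may have to witness $\Did\psi$ through a $D$-loop. My plan is to absorb both cases by duplicating exactly the classes needing an internal witness: set $n(c)=2$ if $c$ contains two distinct points or a $D$-reflexive point, and $n(c)=1$ otherwise, and take as domain $\widehat X=\{(c,i)\mid c\in W,\ i<n(c)\}$ with the \emph{actual} inequality relation $\widehat D={\neq}_{\widehat X}$, valuation lifted class-wise, and $\widehat R=\{((c,i),(c',j))\mid (c,c')\in\bar R\}$ for the minimal filtration $\bar R$ of $R$. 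A case analysis on the location of the $\psi$-points (all in one class versus spread over $\ge 2$ classes), using ${\neq}_X\se D$, then shows that for every $\Did\psi\in\Sigma$ the value of $\Did\psi$ under $\widehat D={\neq}_{\widehat X}$ at any copy of $[x]$ agrees with $M,x\mo\Did\psi$; this is precisely what the doubling rule is calibrated for. Together with the standard filtration clause for $\Di$ (both copies of a class being indistinguishable for $\widehat R$), an induction on $\psi\in\Sigma$ gives the truth lemma $\widehat M,(c,i)\mo\psi \Leftrightarrow M,x\mo\psi$ for $x\in c$.

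Finally, $\widehat X$ is finite (at most $2\cdot 2^{\lng\vf}$ points), the frame $(\widehat X,\widehat R,{\neq}_{\widehat X})$ lies in $\clK_{\mathrm{fin}}$, and $\widehat M,([\Gamma],0)\not\mo\vf$ by the truth lemma, so $\vf\notin\Log{\clK_{\mathrm{fin}}}$. Hence $\Log{\clK_{\mathrm{fin}}}\se\diff{\LK{}}$; combined with $\diff{\LK{}}\se\Log{\clK}\se\Log{\clK_{\mathrm{fin}}}$ (fewer frames in $\clK_{\mathrm{fin}}$) all three coincide, which is exactly the assertion of the proposition for both the ``all'' and the ``finite'' reading. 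The one delicate point to get right in the write-up is the verification of the $\Did$-clause of the truth lemma under the doubling rule; the rest is the bookkeeping of a standard minimal filtration.
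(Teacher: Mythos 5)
Your proof is correct, and it proves a slightly stronger statement (an explicit exponential bound on the countermodel), but it is organized differently from the paper. The paper factors the argument into two reusable pieces: first, the finite model property of $\diff{\LK{}}$ via a standard filtration of a point-generated submodel of the canonical model (this yields a finite point-generated $\diff{\LK{}}$-frame $(X,R,D)$ in which, by Proposition~\ref{prop:point-gen-diff}, ${\neq}_X\se D$ but $D$ may still have reflexive points); second, the ``repairing'' construction $\dunr{F}$, which duplicates exactly the $D$-reflexive points and maps p-morphically onto $F$, so that the p-morphism lemma transfers the refutation to a frame whose second relation is genuine inequality. You instead merge filtration and repairing into a single quotient-with-doubling construction (your rule $n(c)=2$ when $c$ is non-singleton or $D$-reflexive) and verify a truth lemma by hand, the delicate case being $\Did$; note that your two cases for doubling correspond precisely to the two ways a class becomes $D_\sim$-reflexive in the paper's minimal filtration, after which the paper's repairing doubles it --- so the resulting finite frames are essentially the same. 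What your route buys is a self-contained argument that never invokes the p-morphism lemma; what the paper's route buys is modularity: the repairing $\dunr{F}$ is isolated as a frame-level construction and is reused verbatim in the proofs of Theorems~\ref{thm:chrom-compl} and~\ref{thm:chrom-conn-compl} (to show that chromatic-number and connectedness properties survive the repairing), which your inlined construction would force you to redo separately in each of those later arguments.
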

This proposition follows from Proposition \ref{prop:point-gen-diff} and the following standard move that ``repairs'' $D$-reflexive points.
For a point-generated $\diff{\LK{}}$-frame $F=(X,R,D)$, let $\dunr{F}$ be the frame $(Y,S,\neq_Y)$, where
\begin{eqnarray*}
% \nonumber % Remove numbering (before each equation)
   &&Y \; =\; \{(x,0):x\in X\}\cup \{(x,1):x\in X\,\&\,xDx\}, \\
  &&(x,i)S(y,j) \;\text{ iff }\; xRy.
\end{eqnarray*}
Let $f:X\to Y$ be the map defined by $f(x,i)=x$. Readily, $f$ is a p-morphism from $\dunr{F}$ onto $F$.
Now Proposition \ref{prop:DeRijkeDiff} follows from the p-morphism lemma (see, e.g., \cite[Theorem 3.14(i)]{BRV-ML}).

The frame $\dunr{F}$  will be used later; we will call it the {\em repairing of} $F$.

\section{Logics of non-$k$-colorable graphs}\label{sec:chrom}
By a {\em graph} we mean a unimodal frame $(X,R)$ in which $R$ is symmetric.
A {\em directed graph} is a unimodal frame.
As usual, a  {\em partition} $\clA$ of a set $X$ is a family
of non-empty pairwise disjoint sets such that $X=\bigcup \clA$.

\begin{definition}\normalfont
Let $X$ be a set, $R\subseteq X\times X$.
A partition $\clA$ of $X$
is {\em proper}, if $\AA A\in\clA \,\AA x\in A\,\AA y \in A \;\neg xRy$. Let $$C(X,R)=\{|\clA|:\clA\text{ is a finite proper partition of }X\}.$$
Let $\C{X,R}$ be the least $k$ in $C(X,R)$, if $C(X,R)\neq \emp$,
and $\infty$ otherwise.

In the case when $R$ is symmetric, $\C{X,R}$ is called the {\em chromatic number of the graph} $(X,R)$.
\later{Refs}
\later{$X=\emp$}
\end{definition}

Put
$$\CF{k}=\AA \bigvee_{i<k} (p_i\wedge \bigwedge_{i\neq j<k} \neg p_j) \imp \EE \bigvee_{i<k} (p_i\wedge \Di p_i).$$

\begin{proposition}[\cite{Hughes1990,GHV04}]\label{prop:chrom}
Let $F=(X,R,D)$ be a point-generated $\diff{\LK{}}$-frame.
Then $\C{X,R}>k$ iff $F\mo \CF{k}$.
\end{proposition}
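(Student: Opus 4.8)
The plan is to turn validity of $\CF{k}$ into a purely combinatorial statement and then match falsifying valuations with proper partitions. The crucial first observation is that, since $F$ is point-generated, Proposition \ref{prop:point-gen-diff} gives ${\neq}_X\,\subseteq\, D$, whence $D\cup Id_X = X\times X$. Consequently the derived modalities $\AA$ and $\EE$ are interpreted as genuine universal and existential quantifiers over all of $X$: for every valuation $\theta$ and every point, $\AA\vf$ is true iff $\vf$ holds at every point of $X$, and $\EE\vf$ is true iff $\vf$ holds at some point of $X$. In particular the truth value of $\CF{k}$ is independent of the evaluation point, so $F\mo\CF{k}$ is equivalent to a condition on valuations alone.

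Next I would unfold the semantics of the two halves of $\CF{k}$. The antecedent $\AA\bigvee_{i<k}(p_i\con\bigwedge_{i\neq j<k}\neg p_j)$ holds under $\theta$ exactly when every point of $X$ satisfies precisely one of $p_0,\dots,p_{k-1}$, i.e.\ the sets $\theta(p_0),\dots,\theta(p_{k-1})$ are pairwise disjoint and cover $X$. The consequent $\EE\bigvee_{i<k}(p_i\con\Di p_i)$ holds under $\theta$ exactly when there are a point $x$ and an index $i<k$ with $x\in\theta(p_i)$ and some $R$-successor $y$ of $x$ lying in $\theta(p_i)$, that is, when some colour class contains an $R$-edge. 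Hence $\CF{k}$ is \emph{refuted} by $\theta$ precisely when $\theta(p_0),\dots,\theta(p_{k-1})$ partition $X$ and no class contains an $R$-edge.

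It therefore suffices to prove the contrapositive equivalence $\C{X,R}\le k$ iff $F\not\mo\CF{k}$, which I would establish by passing between valuations and proper partitions. For the forward direction, take a proper partition $\clA=\{A_0,\dots,A_{m-1}\}$ with $m=|\clA|=\C{X,R}\le k$ and set $\theta(p_i)=A_i$ for $i<m$ and $\theta(p_i)=\emp$ for $m\le i<k$; properness says no class contains an $R$-edge, so under $\theta$ the antecedent holds and the consequent fails, refuting $\CF{k}$. For the converse, a refuting valuation yields, after discarding the empty sets among $\theta(p_0),\dots,\theta(p_{k-1})$, a family of non-empty, pairwise disjoint, $R$-independent sets covering $X$; this is a proper partition of size $\le k$, so $C(X,R)\neq\emp$ and $\C{X,R}\le k$.

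The only point requiring care, rather than real difficulty, is the handling of empty colour classes: a proper partition into fewer than $k$ classes must be padded with empty valuations of the spare variables, and conversely the empty classes of a refuting valuation must be dropped before reading off a partition. The genuinely load-bearing step is the first one, namely the identification of $\AA$ and $\EE$ with true quantifiers over $X$; this is exactly where the hypothesis that $F$ is a point-generated $\diff{\LK{}}$-frame (equivalently ${\neq}_X\,\subseteq\, D$) is used, and without it the $D$-modalities need not reach all of $X$ and the equivalence may fail.
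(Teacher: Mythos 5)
Your proposal is correct and takes essentially the same approach as the paper: the paper's one-line proof likewise reads the antecedent of $\CF{k}$ as saying that the non-empty values of the $p_i$ form a partition of $X$ and the consequent as saying that this partition is not proper, with Proposition~\ref{prop:point-gen-diff} implicitly turning $\AA$ and $\EE$ into genuine universal and existential quantifiers over $X$. Your write-up just makes explicit the quantifier identification and the padding/discarding of empty colour classes, which the paper leaves to the reader.
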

\begin{remark}
Formulas considered in \cite{Hughes1990,GHV04} are formally different.
\end{remark}
\begin{proof}
The premise of $\CF{k}$ says that non-empty values of $p_i$'s form a partition of $X$,
the conclusion says that this partition is not proper. \qed
\end{proof}

In particular, it follows that for every graph $G$,
\begin{equation*}
\text{the chromatic number of }G>k \text{ iff }\diff{G}\mo \CF{k}.
\end{equation*}

To show that logics of non-$k$-colorable graphs have the finite model property, we will use filtrations.

For a model $M = (X, (R_i)_{i<n}, \v)$ and a set of $n$-modal formulas $\Gamma$, put
\begin{center}
$x\sim_{\Gamma} y$ iff $\forall \psi \in\Gamma$ ($M,x\models \psi$ iff $M,y\models \psi$).
\end{center}
%The equivalence $\sim_\Gamma$ is said to be {\em induced by $\Gamma$ in $M$}.

For a formula $\vf$, let $\SubFrs{\vf}$ be the set of all subformulas of $\varphi$. A set $\Gamma$ of formulas is {\em $\SubFrs$-closed}, if $\SubFrs{\varphi} \subseteq \Gamma$ whenever $\varphi \in \Gamma$.
\begin{definition}
\normalfont
Let $\Gamma$ be a $\SubFrs$-closed
set of formulas.
A {\em $\Gamma$-filtration} of a model $M = (X, (R_i)_{i<n}, \v)$   is a model $\ff{M}=(\ff{X},(\ff{R}_i)_{i<n},\ff{\theta})$
such that
\begin{enumerate}
\item $\ff{X}=X{/}{\sim}$ for some equivalence relation $\sim$ such that $\sim \;\subseteq \;\sim_\Gamma$;
\item ${\ff{M},[x]\models p}$ iff ${M,x\models p}$ for all $p\in \Gamma$.
Here $[x]$ is the $\sim$-class of $x$.
\item For all $i<n$, we have ${(R_i)}_{\sim} \subseteq \ff{R}_i \subseteq  {(R_i)}_{\sim}^\Gamma$, where
$$
\begin{array}{ccl}
~[x]\,{(R_i)}_\sim\,[y] & \text{iff} & \exists x'\sim x\ \exists y'\sim
y\;
(x'\,R_i\,y'),
\smallskip \\
~[x]\,{(R_i)}_{\sim}^\Gamma\,[y] & \text{iff} & \forall \psi\;
   (\Di_i \psi\in \Gamma \: \& \: M,y\models\psi \Rightarrow M,x\models \Di_i\psi ).
\end{array}
$$
\end{enumerate}
The relations ${(R_i)}_\sim$ are called the \emph{minimal filtered relations}.

If $\sim\; =\; \sim_\Psi$ for some finite  set of formulas ${\Psi\supseteq\Gamma}$,
then $\ff{M}$ is called a \emph{definable $\Gamma$-filtration} of the model~$M$.
%If $\sim\; =\; \sim_\Gamma$, the filtration $\ff{M}$ is said to be {\em strict}.
\end{definition}

The following fact is well known, see, e.g., \cite{CZ}:
\begin{proposition}[Filtration lemma]\label{prop:Filtration}
Suppose that $\Gamma$ is a finite $\operatorname{Sub}$-closed set of formulas
and $\ff{M}$ is a $\Gamma$-filtration of a model~$M$.
Then, for all points $x$ in $M$ and all formulas ${\varphi \in\Gamma}$,
we have: \ \
\begin{center}
${M,x\models \varphi}$ iff ${\widehat{M},[x]\models\varphi}$.
\end{center}
\end{proposition}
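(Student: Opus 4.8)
The plan is to argue by induction on the structure of $\vf$, crucially using that $\Gamma$ is $\SubFrs$-closed: whenever I decompose a formula in $\Gamma$, its immediate subformulas again lie in $\Gamma$, so the induction hypothesis is available for them. For the base cases, if $\vf=\bot$ then both $M,x\mo\vf$ and $\ff{M},[x]\mo\vf$ fail, so the biconditional holds trivially; and if $\vf=p$ is a variable, the required equivalence is exactly condition (2) in the definition of a $\Gamma$-filtration. For the Boolean step $\vf=\psi\imp\chi$, both $\psi$ and $\chi$ belong to $\Gamma$, the induction hypothesis gives the equivalence for each of them, and since the truth of an implication is determined pointwise by the truth of its immediate subformulas, the equivalence transfers to $\vf$.

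The only case that uses the specific structure of a filtration is $\vf=\Di_i\psi$, where $\psi\in\Gamma$ by $\SubFrs$-closedness. For the forward direction, if $M,x\mo\Di_i\psi$, I pick $y$ with $xR_iy$ and $M,y\mo\psi$; taking $x'=x$ and $y'=y$ witnesses $[x]\,{(R_i)}_\sim\,[y]$, and the lower inclusion ${(R_i)}_\sim\sub\ff{R}_i$ gives $[x]\,\ff{R}_i\,[y]$. The induction hypothesis applied to $\psi$ yields $\ff{M},[y]\mo\psi$, hence $\ff{M},[x]\mo\Di_i\psi$. For the backward direction, suppose $\ff{M},[x]\mo\Di_i\psi$, and choose $[y]$ with $[x]\,\ff{R}_i\,[y]$ and $\ff{M},[y]\mo\psi$. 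The induction hypothesis gives $M,y\mo\psi$. Now the upper inclusion $\ff{R}_i\sub{(R_i)}_\sim^\Gamma$ gives $[x]\,{(R_i)}_\sim^\Gamma\,[y]$, and I apply the defining clause of ${(R_i)}_\sim^\Gamma$ to the formula $\psi$: this is legitimate precisely because $\Di_i\psi\in\Gamma$ and $M,y\mo\psi$, and it forces $M,x\mo\Di_i\psi$.

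I expect the backward direction of the modal case to be the crux of the argument, since it is the only place where the sandwiching ${(R_i)}_\sim\sub\ff{R}_i\sub{(R_i)}_\sim^\Gamma$ is genuinely exploited. It relies on two facts simultaneously: that $\Di_i\psi$ itself (not merely $\psi$) lies in $\Gamma$, which is what licenses the use of the maximal filtered relation's defining condition, and that ${(R_i)}_\sim^\Gamma$ is designed exactly to transport the truth of $\psi$ at $[y]$ back to the truth of $\Di_i\psi$ at $[x]$. Everything else is a routine structural induction; note that the finiteness of $\Gamma$ is not needed for the inductive argument itself, but only to ensure that $\ff{M}$ is a well-defined (finite) model in the intended applications.
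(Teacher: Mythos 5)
Your proof is correct. The paper does not actually prove this proposition---it is cited as a well-known fact from the literature---and your structural induction (base cases from condition (2), Booleans by the induction hypothesis, the forward modal direction from the lower bound ${(R_i)}_\sim \subseteq \ff{R}_i$, and the backward direction from the upper bound $\ff{R}_i \subseteq {(R_i)}_{\sim}^\Gamma$ together with $\Di_i\psi \in \Gamma$) is exactly the standard argument given in the cited sources, correctly identifying the backward modal step as the crux.
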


For a bimodal formula $\vf$, let $\scheme{\vf}$ be the set of bimodal formulas that are substitution instances of $\vf$
(the axiom scheme).

\begin{lemma}\label{lem:filtr-for-chrom}
Let $M=(X,R,D,\v)$ be a bimodal model, $k<\omega$, $M\mo\scheme{\CF{k}}$, and let
$\Gamma$ be a finite $\SubFrs$-closed  set of bimodal formulas. Then for every finite $\Psi\supseteq \Gamma$, for every
$\Gamma$-filtration $\ff{M}=(X/{\sim_\Psi},\ff{R},\ff{D},\ff{\v})$ of $M$, we have $\C{X/{\sim_\Psi},\ff{R}}>k$.
\end{lemma}
\begin{remark}
We do not make the assumption that $(X,R,D)$ is a $\diff{\LK{}}$-frame or even that
$M\mo \diff{\LK{}}$.
We also do not assume that $\C{X,R}>k$: in general, $M\mo\scheme{\CF{k}}$ is a weaker condition.
\end{remark}
\begin{proof}
Let $\ff{X}=X/{\sim_\Psi}$.
Since $\Psi$ is finite, for every $A\in \ff{X}$ there is a modal formula $\psi_A$ such
that
\begin{equation}\label{eq:psi-A}
  M,x\mo \psi_A \text{ iff } x\in A.
\end{equation}
Hence, for every $B\subseteq \ff{X}$, for the formula $\vf_B=\bigvee_{A\in B}\psi_A$ we have:
\begin{equation}\label{eq:vf-B}
  M,x\mo \vf_B \text{ iff } x\in \bigcup B.
\end{equation}
We say that $\vf_B$ {\em defines} $B$.

Let $\clB$ be a partition of $\ff{X}$ and $|\clB|=n\leq k$.
Then $\{ \bigcup B : B\in \clB\}$ is a partition of $X$.
Let $\vf_0,\ldots \vf_{n-1}$ be formulas that define elements of $\clB$. For $n-1<i<k$, let $\vf_i=\bot$.
By \eqref{eq:vf-B}, we have
\begin{equation*}%\label{eq:M-forall}
M\mo \AA \bigvee_{i<k}(\vf_i\con \bigwedge_{i\neq j<k}\neg \vf_j).
\end{equation*}
The result of substitution of $\vf_i$'s for $p_i$'s in $\CF{k}$ is true in $M$, so\later{engl}
\begin{equation*}
M\mo \EE \bigvee_{i<k}(\vf_i\con \Di\vf_i).
\end{equation*}
It follows from \eqref{eq:vf-B} that for some $i$, for some $x,y\in \bigcup B_i$ we have $xRy$.
Let $[x]_\Psi$  denote the $\sim_\Psi$-class of $x$.
We have $[x]_\Psi, [y]_\Psi\in B_i$.
Since $\ff{R}$ contains the minimal filtered relation, $[x]_\Psi \ff{R} [y]_\Psi$.
So $\clB$ is not a proper partition of $(\ff{X},\ff{R})$. \qed
\end{proof}

\obsolete{
It follows that all logics $\LK{}_2+\CF{k}$ have the finite model property and are decidable. Our goal is to transfer this result to the logics of graphs. \todo{improve}
}

Recall that the modal formula $p\imp \Box\Di p$ expresses the symmetry of a binary relation.
Let $\LB$ be the smallest unimodal logic containing this formula. It is well known that
this logic is canonical.

\begin{theorem}\label{thm:fmpCn}
For each $k<\omega$, the logics
$\diff{\LK{}}+\CF{k}$ and $\diff{\LB}+\CF{k}$ have the exponential finite model property and are decidable.
\end{theorem}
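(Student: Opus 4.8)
The plan is to obtain the exponential fmp by filtrating the canonical model and to read off decidability afterwards. Write $L$ for either $\diff{\LK{}}+\CF{k}$ or $\diff{\LB}+\CF{k}$, and suppose $L\not\vd\vf$. By Proposition \ref{prop:k-canonical-model}, $\vf$ is refuted at some point $x$ of the canonical model $M_L$, and passing to the submodel $M=(X,R,D,\v)$ generated by $x$ preserves this refutation. Since $\diff{\LK{}}$ (respectively $\diff{\LB}$) is axiomatized by canonical formulas, Proposition \ref{prop:canon-gener} gives $F_L\mo\diff{\LK{}}$ (resp.\ $\diff{\LB}$); as validity is inherited by the generated subframe underlying $M$, that frame is a point-generated $\diff{\LK{}}$-frame, so $\neq_X\,\subseteq\,D$ by Proposition \ref{prop:point-gen-diff}, and in the $\LB$-case $R$ is symmetric. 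Finally, because $\CF{k}\in L$ and $L$ is substitution-closed, every member of $\scheme{\CF{k}}$ is an $L$-theorem, hence true throughout $M_L$ and therefore throughout $M$; thus $M\mo\scheme{\CF{k}}$.

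Next I would set $\Gamma=\SubFrs{\vf}$ and form the minimal filtration $\ff{M}=(\ff{X},\ff{R},\ff{D},\ff{\v})$ of $M$ through $\sim_\Gamma$, so that $\ff{R}=(R)_{\sim}$ and $\ff{D}=(D)_{\sim}$. This is a definable $\Gamma$-filtration with $\Psi=\Gamma$, and $|\ff{X}|\le 2^{|\Gamma|}=2^{\lng{\vf}}$. Proposition \ref{prop:Filtration} then yields $\ff{M},[x]\not\mo\vf$, while Lemma \ref{lem:filtr-for-chrom}, applicable precisely because $M\mo\scheme{\CF{k}}$ and the filtration is definable, gives $\C{\ff{X},\ff{R}}>k$. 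In the $\LB$-case the minimal filtered relation $\ff{R}$ is again symmetric, since $R$ is.

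The step I expect to be the crux is verifying that the finite frame $\ff{F}=(\ff{X},\ff{R},\ff{D})$ is genuinely an $L$-frame, for only then does the refutation witness $L\not\vd\vf$. The key observation is that $\neq_X\,\subseteq\,D$ forces $\neq_{\ff{X}}\,\subseteq\,\ff{D}$: if $[y]\neq[z]$ then $y\neq z$, so $yDz$, whence $[y]\,(D)_{\sim}\,[z]$. Consequently $\ff{D}$ is symmetric (minimal filtration preserves symmetry of $D$) and $\ff{D}\cup Id_{\ff{X}}$ is the universal relation on $\ff{X}$, hence transitive and containing $\ff{R}$; by the frame correspondences recalled after \eqref{eq:diff-axioms}, all three difference axioms hold, so $\ff{F}\mo\diff{\LK{}}$, and by Proposition \ref{prop:point-gen-diff} $\ff{F}$ is point-generated. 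In the $\LB$-case the symmetry of $\ff{R}$ additionally yields $\ff{F}\mo\diff{\LB}$. Now Proposition \ref{prop:chrom} applies to the point-generated $\diff{\LK{}}$-frame $\ff{F}$ and, together with $\C{\ff{X},\ff{R}}>k$, gives $\ff{F}\mo\CF{k}$. Hence $\ff{F}$ is a finite $L$-frame of cardinality at most $2^{\lng{\vf}}$ refuting $\vf$, which is the exponential fmp. It is worth noting that the repairing construction is not needed here: loops in $(D)_{\sim}$ are harmless, precisely because $\ff{D}\cup Id_{\ff{X}}$ is already universal.

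Decidability is then routine. Each $L$ is finitely axiomatized, so for a given finite bimodal frame one can decide whether it validates the (finitely many) difference axioms, the symmetry axiom in the $\LB$-case, and the fixed formula $\CF{k}$, as well as whether it refutes $\vf$. By soundness and the exponential fmp just established, $\vf\in L$ iff $\vf$ is valid in every $L$-frame of cardinality at most $2^{\lng{\vf}}$; searching through the finitely many such frames therefore decides membership in $L$.
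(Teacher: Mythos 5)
Your proposal is correct and follows essentially the same route as the paper's own proof: refute $\vf$ in a point-generated submodel of the canonical model, take the minimal filtration through $\SubFrs{\vf}$, and use the Filtration lemma together with Lemma \ref{lem:filtr-for-chrom} (plus preservation of symmetry and of $\neq_X\subseteq D$ under minimal filtration) to see that the resulting finite frame of size $\le 2^{\lng{\vf}}$ validates the logic. Your additional explicit verification of the difference axioms in the filtrated frame and the spelled-out decidability argument are details the paper leaves implicit, not a different method.
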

\begin{proof}
Let $M_1=(X_1,R_1,D_1,\v_1)$ and $M_2=(X_2,R_2,D_2,\v_2)$ be the canonical models of the logics $\diff{\LK{}}+\CF{k}$ and $\diff{\LB}+\CF{k}$, respectively.
By Proposition \ref{prop:canon-gener}, the canonical frames $(X_1,R_1,D_1)$ and $(X_2,R_2,D_2)$  validate the logic $\diff{\LK{}}$,
and also $R_2$ is symmetric.

Let $L$ be one of these logics, $\vf\notin L$.
Then $\vf$ is false at a point $x$ in the canonical model of $L$. Let $M=(Y,R,D,\v)$ be its submodel generated by $x$. By Proposition \ref{prop:point-gen-diff},  for all $y,z\in Y$ we have:
\begin{equation}\label{eq:diff}
  \text{if } y\neq z, \text{ then } yDz.
\end{equation}

Let $\Gamma=\SubFrs{\vf}$, $\sim\;=\;\sim_\Gamma$.
Put $\ff{Y}=Y{/}{\sim}$, and
consider the filtration $\ff{M}=(\ff{Y}, R_\sim, D_\sim,\ff{\v})$.
Clearly, the size of $\ff{Y}$ is bounded by $2^\lng{\vf}$

By Filtration lemma (Proposition \ref{prop:Filtration}), $\vf$ is falsified in $\ff{M}$.
Let us show that the frame $(\ff{Y}, R_\sim, D_\sim)$ validates $L$.

From \eqref{eq:diff}, it follows that $(\ff{Y}, R_\sim, D_\sim)$ validates the logic $\diff{K}$.
In the case of symmetric $R$, the minimal filtered relation $R_\sim$ is also symmetric.
Finally, by Lemma \ref{lem:filtr-for-chrom}, $\C{\ff{Y}, R_\sim}>k$.
By Proposition \ref{prop:chrom}, $(\ff{Y}, R_\sim, D_\sim)$ validates $L$.

Hence $L$ is complete with respect to its finite frames.
\qed
\end{proof}

\begin{theorem}\label{thm:chrom-compl}
Let $\clG^{>k}$ be the class of graphs $G$ such that $\C{G}>k$,
and let $\clD^{>k}$ be the class of directed graphs $G$ such that $\C{G}>k$.
Then $\Log{\diff{\clG}^{>k}}=\diff{\LB{}}+\CF{k}$,
and $\Log{\diff{\clD}^{>k}}=\diff{\LK{}}+\CF{k}$.
\end{theorem}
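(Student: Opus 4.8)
The plan is to prove each of the two identities by establishing the inclusions $\supseteq$ (soundness) and $\subseteq$ (completeness) separately, treating the directed and non-directed cases in parallel: write $L$ for $\diff{\LK{}}+\CF{k}$ or $\diff{\LB{}}+\CF{k}$, and $\clC$ for the corresponding class $\diff{\clD}^{>k}$ or $\diff{\clG}^{>k}$. Since $\Log{\clC}$ is a normal logic, the inclusion $L\se\Log{\clC}$ reduces to checking that every axiom of $L$ is valid throughout $\clC$.

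\emph{Soundness.} Fix $G\in\clD^{>k}$ (resp.\ $\clG^{>k}$) and consider $\diff{G}=(X,R,{\neq}_X)$. By Proposition \ref{prop:point-gen-diff} this is a $\diff{\LK{}}$-frame, and since every point reaches every other along ${\neq}_X$ it is point-generated; in the non-directed case $R$ is moreover symmetric, so $p\imp\Box\Di p$ is valid and $\diff{G}$ validates $\diff{\LB{}}$. Because $\C{X,R}>k$ by the choice of $G$, Proposition \ref{prop:chrom} gives $\diff{G}\mo\CF{k}$. Hence all axioms of $L$ are valid in $\clC$.

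\emph{Completeness.} I argue contrapositively: given $\vf\notin L$, I produce a frame of $\clC$ refuting $\vf$. The proof of Theorem \ref{thm:fmpCn} supplies a finite model whose underlying frame $F=(\ff{Y},R_\sim,D_\sim)$ refutes $\vf$, validates $L$, and satisfies $\C{\ff{Y},R_\sim}>k$ (with $R_\sim$ symmetric in the $\diff{\LB{}}$ case). Since \eqref{eq:diff} together with the minimal filtered relation forces ${\neq}_{\ff{Y}}\se D_\sim$, Proposition \ref{prop:point-gen-diff} shows $F$ is a point-generated $\diff{\LK{}}$-frame, so its repairing $\dunr{F}=(Z,S,{\neq}_Z)=\diff{(Z,S)}$ is defined for the directed graph $(Z,S)$; in the symmetric case $S$ is symmetric as well, so $(Z,S)$ is a graph. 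Pulling the refuting valuation back along the p-morphism $f\colon\dunr{F}\toto F$, the frame $\dunr{F}$ also refutes $\vf$. It then remains to show $(Z,S)\in\clD^{>k}$ (resp.\ $\clG^{>k}$), i.e.\ $\C{Z,S}>k$, after which $\dunr{F}\in\clC$ witnesses $\vf\notin\Log{\clC}$.

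The main obstacle is precisely this last point: verifying that the repairing does not lower the chromatic number below $k+1$. The reduct of $f$ is a surjective p-morphism of the graphs $(Z,S)\toto(\ff{Y},R_\sim)$, and I plan to transfer colorings along it. Given any finite proper partition $\clB$ of $(Z,S)$ with $|\clB|\le k$, assigning to each $x\in\ff{Y}$ the block of the copy $(x,0)$ yields a partition of $\ff{Y}$ of size $\le k$; the forth condition $\big((x,0)S(y,0)$ whenever $xR_\sim y\big)$ shows it is proper, while properness of $\clB$ excludes $S$-loops and hence $R_\sim$-loops, so the induced coloring is genuine and contradicts $\C{\ff{Y},R_\sim}>k$. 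Thus $\C{Z,S}>k$. Assembling soundness and completeness for both choices of $L$ yields the two stated equalities. \qed
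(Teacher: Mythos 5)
Your proof is correct and follows essentially the same route as the paper: completeness via the finite point-generated frames from Theorem \ref{thm:fmpCn}, then the repairing $\dunr{F}$ with the refutation transferred along the p-morphism, and preservation of non-$k$-colorability by projecting a partition of the repaired frame to the original frame via the $(x,0)$-copies. The only differences are presentational — you state the soundness inclusion explicitly (the paper leaves it implicit) and phrase the partition-transfer argument contrapositively, whereas the paper argues directly that any $\le k$-element partition of $(Y,S)$ fails to be proper.
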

\begin{proof}
By Theorem \ref{thm:fmpCn}, the logics
$\diff{\LK{}}+\CF{k}$ and $\diff{\LB}+\CF{k}$  are complete with respect to their finite point-generated frames.

Consider a point-generated $\diff{\LK{}}$-frame $F=(X,R,D)$ and its repairing  $\dunr{F}=(Y,S,\neq_Y)$.
Recall that $F$ is a p-morphic image of $\dunr{F}$.
Let $\clA$ be a partition of $Y$, $|\clA|\leq k$. Consider the following partition $\clB$ of $X$:
$B\in\clB$ iff there is $A\in \clA$ such that $B=\{x: (x,0) \in A\}$ and $B\neq \emp$.

Assume that $\C{X,R}>k$. It follows that for some $B\in \clB$ and  some $x,y\in B$ we have $xRy$.
Then for some $A\in \clA$ we have $(x,0),(y,0)\in A$ and
$(x,0)S (y,0)$. Thus, $\clA$ is not a proper partition of $(Y,S)$. Hence, $\C{Y,S}>k$.
This completes the proof in the directed case: $\Log{\diff{\clD}^{>k}}=\diff{\LK{}}+\CF{k}$.

%This proves that $\Log{\diff{\clG}^{>n}}=\diff{\LK{}}+\CF{k}$.
Clearly, if $R$ is symmetric, then $S$ is symmetric is well. This observation completes the proof in the non-directed case.
\qed
\end{proof}

\begin{remark}
These theorems can be extended for the case of graphs where the relation is irreflexive, if
instead of the formula $\Di p \imp \EE p$ in the definition of $\diff{L}$ we use the formula $\Di p \imp \Did  p$.
Then in any frame $(X,R,D)$ validating this version of $\diff{L}$, the second relation contains $R$,
and so if a point is $R$-reflexive, it is also $D$-reflexive.
In this case, the repairing $\dunr{F}$ should be modified in the following way:
\begin{eqnarray*}
% \nonumber % Remove numbering (before each equation)
   &&Y \; =\; \{(x,0):x\in X\}\cup \{(x,i):x\in X\,\&\,xDx\,\&\,0<i\leq k\}, \\
  &&(x,i)S(y,j) \;\text{ iff }\; xRy\;\&\;( (x,i)\neq (y,j)).
\end{eqnarray*}
Then $S$ is irreflexive, the map $(x,i)\mapsto x$ remains a p-morphism,
and $R$-reflexive points in $F$ become cliques of size $>k$. Also,  it follows
that $\C{Y,S}>k$ whenever $\C{X,R}>k$.
\end{remark}

\begin{remark}
A related result was obtained very recently in \cite{Ding2023}:
it was shown that in neighborhood semantics of modal language, the non-k-colorability of hypergraphs is expressible, and the resulting modal systems are decidable as well.\footnote{I am grateful to Gillman Payette for sharing with me this reference after my talk at WoLLIC.} 
\end{remark}

\section{Logics of connected graphs}\label{sec:conn}

A frame $F=(X,R)$ is {\em connected}, if for any points $x,y$ in $X$,
there are
%n<\omega
points $x_0=x, x_1, \ldots, x_n=y$ such that for each $i<n$, $x_i R x_{i+1}$ or $x_{i+1} R x_{i}$.
\todo{$n<\omega$}

Let $\ACon$ be the following formula:
\begin{equation}\label{eq:conn}
\EE p \wedge \EE \neg  p \imp \EE (p\wedge \Di \neg p).
\end{equation}

\begin{proposition}\label{prop:conn}
Let $F=(X,R,D)$ be a point-generated $\diff{\LB}$-frame.
Then $(X,R)$ is connected iff $F\mo \ACon$.
\end{proposition}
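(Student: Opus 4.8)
The plan is to first translate the validity of $\ACon$ into a purely combinatorial statement about $(X,R)$, and then verify the two implications directly; the only real care needed is in handling the semantics of $\EE$ and the role of symmetry.

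First I would record the structural facts that make $\EE$ and $\AA$ act as genuine quantifiers. Since $F$ is a point-generated $\diff{\LB}$-frame, Proposition \ref{prop:point-gen-diff} gives ${\neq}_X\se D$, so $D\cup Id_X=X{\times}X$; hence for every valuation $\theta$ and every point $x$ we have $M,x\mo\EE\psi$ iff $\psi$ is true at some point of $M$, and dually for $\AA$. Moreover, being an $\LB$-frame means $R$ is symmetric. Writing $S=\theta(p)$, the antecedent $\EE p\con\EE\neg p$ of $\ACon$ then says $S\neq\emp$ and $S\neq X$, while the consequent $\EE(p\con\Di\neg p)$ says that some point of $S$ has an $R$-successor in $X\setminus S$. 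Thus $F\mo\ACon$ is equivalent to the condition ($\ast$): for every $S$ with $S\neq\emp$ and $S\neq X$ there is an $R$-edge from $S$ to $X\setminus S$. This reformulation is the crux of the argument, after which both directions are short.

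For the direction ``connected $\Rightarrow F\mo\ACon$'', I would verify ($\ast$). Given such an $S$, pick $x\in S$ and $y\in X\setminus S$; by connectedness there is a sequence $x_0=x,\ldots,x_m=y$ with $x_iRx_{i+1}$ or $x_{i+1}Rx_i$ for each $i<m$. Since $x_0\in S$ and $x_m\notin S$, there is a least index $i$ with $x_i\in S$ and $x_{i+1}\notin S$. Here symmetry of $R$ is essential: connectedness only supplies an edge in one of the two directions, but $R$ symmetric yields $x_iRx_{i+1}$, giving the desired edge from $S$ to $X\setminus S$. Equivalently, $M,x_i\mo p\con\Di\neg p$, so $\EE(p\con\Di\neg p)$ holds.

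For the converse I would argue contrapositively: if $(X,R)$ is not connected, I refute ($\ast$). Fix a point $a$ and let $S$ be the set of points reachable from $a$ along $R$-paths (in either direction). Then $a\in S$ so $S\neq\emp$, and since $(X,R)$ is disconnected $S\neq X$. By symmetry of $R$, $S$ is closed under $R$ in both directions, so no $R$-edge leaves $S$; setting $\theta(p)=S$ falsifies the consequent of $\ACon$ while satisfying its antecedent. Hence $F\not\mo\ACon$. The main (and only) subtlety throughout is that the formula $\ACon$ refers to forward $R$-successors via $\Di$, whereas connectedness is defined using edges in either direction; symmetry of the $\diff{\LB}$-frame is exactly what reconciles the two, which is why the proposition is stated for $\diff{\LB}$ rather than $\diff{\LK{}}$.
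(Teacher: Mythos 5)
Your proof is correct and follows essentially the same route as the paper's: the first direction finds a switching point along a path from a $p$-point to a $\neg p$-point (using symmetry to orient the edge), and the second uses the $R$-reachability set of a point as the valuation of $p$. The only difference is stylistic: you make explicit the global reading of $\EE$ via Proposition \ref{prop:point-gen-diff} and package validity of $\ACon$ as a cut condition $(\ast)$, which the paper leaves implicit.
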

\begin{proof}
Assume that $(X,R)$ is connected and $M$ is a model on $F$ such that
$\EE p \wedge \EE \neg  p$ is true (at some point) in $M$. Hence
there are points $x,y$ in $M$ such that $M,x\mo p$ and $M,y\mo \neg p$.
Then there are $x_0=x, x_1, \ldots, x_n=y$ such that $x_i R x_{i+1}$ for each $i<n$.
Let $k=\max\{i:M,x_i\mo p\}$. Then $M,x_k\mo p\con \Di \neg p$. Hence $\ACon$ is valid in $F$.

Assume that $(X,R)$ is not connected. Then there are $x,y$ in $X$ such that $(x,y)\notin R^*$,
where $R^*$ is the reflexive transitive closure of $R$. Put $\v(p)=$\mbox{$\{z: (x,z)\in R^*\}$s}.
In the model $M=(F,\v)$, we have $M\mo\EE p \wedge \EE \neg  p $. On the other hand, at every point $z$ in $M$ we have $M,z\mo p\imp \Box p$, so the conclusion of $\ACon$ is not true in $M$. So $\ACon$ is not valid in $F$.
\qed
\end{proof}

In particular, it follows that for every graph $G$,
\begin{equation*}
G \text{ is connected iff } \diff{G}\mo \ACon.
\end{equation*}

\begin{remark}
There are different ways to express connectedness in propositional modal languages \cite{Sheht1990}.
In particular, in the directed case, the connectedness can be expressed by the following modification of
\eqref{eq:conn}:
$$
\EE p \wedge \EE \neg  p \imp \EE (p\wedge \Di \neg p)\vee \EE (\neg p\wedge \Di p);
$$
Following the line of \cite{Sheht1990}, one can modally express the property of a graph
to have at most $n$ connected components for each finite $n$.
\end{remark}

It is known that in many cases, adding axioms of connectedness preserves the finite model property
\cite{Sheht1990,GoldblattHodkinsonTangle}. The following lemma shows that this is the case in our setting as well.

\begin{lemma}\label{lem:filtr-for-conn}
Assume that $(X,R,D)$ is a point-generated $\diff{\LK{B}}$-frame.
Let $M=(X,R,D,\v)$ be a model such that $M\mo\scheme{\ACon}$,  and
let $\Gamma$ be a finite $\SubFrs$-closed set of bimodal formulas. Then for every finite $\Psi\supseteq \Gamma$, for every
$\Gamma$-filtration $\ff{M}=(X/{\sim_\Psi},\ff{R},\ff{D},\ff{\v})$ of $M$,
$(X/{\sim_\Psi},\ff{R})$ is connected.
\end{lemma}

\begin{remark}
Similarly to Lemma \ref{lem:filtr-for-chrom}, connectedness of $(X,R)$ does not follow from $M\mo\scheme{\ACon}$.
\end{remark}
\begin{proof}
Let $\ff{X}=X/{\sim_\Psi}$, $c$ the number of elements in $\ff{X}$.
We recursively define $c$ distinct elements $A_0,\ldots,A_{c-1}$ of $\ff{X}$,
and auxiliary sets $\ff{Y}_n=\{A_0,\ldots,A_{n}\}$, $\ff{R}_n=\ff{R}\cap (\ff{Y}_n\times \ff{Y}_n)$ for $n<c$
such that
\begin{equation}\label{eq:conn-n}
\text{
the restriction $(\ff{Y}_n,\ff{R}_n)$ of $(\ff{X},\ff{R})$ to $\ff{Y}_n$ is connected. %for $n<c$.
}
\end{equation}

Let $A_0$ be any element of $\ff{X}$. The frame $(\ff{Y}_0,\ff{R}_0)$ is connected, since it is a singleton.

Assume $0<n<c$ and define $A_n$.
By the same reasoning as in Lemma \ref{lem:filtr-for-chrom},
there is a formula $\vf_n$  such that
%for each $B\subseteq \ff{X}$, there is $\vf_B$ that defines $B$ in $M$. Let $\vf_n$ define $V=\bigcup %\ff{Y}_{n-1}$ in $M$:
\begin{equation}\label{eq:vf-conn}
  M,x\mo \vf_n \text{ iff } x\in A_i \text{ for some }i<n.
\end{equation}
The formula
\begin{equation}\label{eq:vf-sub-con}
\EE \vf_n \wedge \EE \neg  \vf_n \imp \EE (\vf_n\wedge \Di \neg \vf_n).
\end{equation}
is a substitution instance of $\ACon$, so it is true in $M$. Let $V=\bigcup \ff{Y}_{n-1}$.
The set $\ff{Y}_{n-1}$ has $n<c$ elements, so there are points $x,y$ in $X$ such that
$x\in V$, and $y\notin V$. So $M,x\mo \vf_n$ and $M,y\mo \neg \vf_n$.
%Since $M$ is point-generated,
By Proposition \ref{prop:point-gen-diff},
the premise of \eqref{eq:vf-sub-con} is true in $M$. Hence we have
$M,z\mo\vf_n\wedge \Di \neg \vf_n$ for some $z$ in $M$. Then $z\in V$ and
there exists $u$ in $X\setminus V$ with $zRu$. Since $\ff{R}$ contains the minimal filtered relation,
$[z]_\Psi \ff{R} [u]_\Psi$.
We put $A_n=[u]_\Psi$.
By the hypothesis \eqref{eq:conn-n}, $(\ff{Y}_{n-1},\ff{R}_{n-1})$ is connected, and so
$(\ff{Y}_n,\ff{R}_n)$ is connected as well.

Finally, observe that $(\ff{Y}_{c-1},\ff{R}_{c-1})$ is the frame $(\ff{X},\ff{R})$.
\qed
\end{proof}

\begin{theorem}\label{thm:fmpCnConn}
For each $k<\omega$, the logics $\diff{\LB}+\{\ACon,\Di\top\}$  and
$\diff{\LB}+\{\CF{k}, \ACon,\Di\top \}$ have the exponential finite model property and are decidable.
\end{theorem}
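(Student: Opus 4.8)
The plan is to reuse the filtration-of-the-canonical-model strategy from the proof of Theorem~\ref{thm:fmpCn}, now invoking Lemma~\ref{lem:filtr-for-conn} (and, for the second logic, also Lemma~\ref{lem:filtr-for-chrom}) to force the filtered frame to validate the connectedness and chromatic axioms. Let $L$ be one of the two logics and let $\vf\notin L$. Since the axioms of $\diff{\LB}$ are Sahlqvist, $\diff{\LB}$ is canonical, and $L\supseteq \diff{\LB}$, so Proposition~\ref{prop:canon-gener} gives $F_L\models\diff{\LB}$; in particular the canonical relation is symmetric. I would take a point $x$ of the canonical model $M_L$ at which $\vf$ is refuted and let $M=(Y,R,D,\v)$ be the submodel generated by $x$. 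As validity of $\diff{\LB}$ is inherited by generated subframes, $(Y,R,D)$ is a point-generated $\diff{\LB}$-frame, so by Proposition~\ref{prop:point-gen-diff} we have ${\neq}_Y\subseteq D$.

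The decisive point, and the reason this goes through even though $\ACon$ need not be canonical, is that I will never need $\ACon$ (nor $\CF{k}$) to be \emph{valid} in the infinite frame $(Y,R,D)$; it suffices that all of their substitution instances be \emph{true} in the model $M$. This is automatic: since $\ACon\in L$ (resp. $\CF{k}\in L$) and $L$ is closed under substitution, every instance is a theorem of $L$, hence true in $M_L$ and therefore in its generated submodel $M$. Thus $M\models\scheme{\ACon}$ (and $M\models\scheme{\CF{k}}$ for the second logic), which is exactly the hypothesis of Lemmas~\ref{lem:filtr-for-conn} and~\ref{lem:filtr-for-chrom}. This is fully consistent with the remarks following those lemmas: $(Y,R)$ itself is typically neither connected nor non-$k$-colorable.

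Next I would set $\Gamma=\SubFrs{\vf}$, $\sim\,=\,\sim_\Gamma$, and form the minimal filtration $\ff{M}=(\ff{Y},R_\sim,D_\sim,\ff{\v})$ with $\ff{Y}=Y/{\sim}$, of cardinality at most $2^{\lng{\vf}}$. By the Filtration lemma (Proposition~\ref{prop:Filtration}), $\vf$ is still refuted in $\ff{M}$, so it remains to check that $(\ff{Y},R_\sim,D_\sim)$ validates $L$. From ${\neq}_Y\subseteq D$ one gets ${\neq}_{\ff{Y}}\subseteq D_\sim$ (if $[y]\neq[z]$ then $y\neq z$, so $yDz$ and hence $[y]\,D_\sim\,[z]$), so by Proposition~\ref{prop:point-gen-diff} the filtered frame is a point-generated $\diff{\LK{}}$-frame; as $R$ is symmetric, $R_\sim$ is symmetric, so it is in fact a point-generated $\diff{\LB}$-frame. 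Applying Lemma~\ref{lem:filtr-for-conn} with $\Psi=\Gamma$ yields that $(\ff{Y},R_\sim)$ is connected, whence $(\ff{Y},R_\sim,D_\sim)\models\ACon$ by Proposition~\ref{prop:conn}; for the second logic, Lemma~\ref{lem:filtr-for-chrom} additionally gives $\C{\ff{Y},R_\sim}>k$, whence $(\ff{Y},R_\sim,D_\sim)\models\CF{k}$ by Proposition~\ref{prop:chrom}. Hence $L$ is complete with respect to its finite frames of size $\leq 2^{\lng{\vf}}$, which is the exponential fmp; decidability then follows from the fmp together with the finite axiomatizability of $L$.

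The main obstacle is conceptual rather than computational: it is not a matter of proving that $\ACon$ is canonical (which may fail), but of recognizing that canonicity is unnecessary once the filtration lemmas are in place. The only thing to verify carefully is that Lemmas~\ref{lem:filtr-for-conn} and~\ref{lem:filtr-for-chrom} can be applied \emph{simultaneously} to one and the same minimal filtration. This is unproblematic: both lemmas constrain only the minimal filtered relation (they use $[x]_\Psi\,R_\sim\,[y]_\Psi$ whenever $xRy$) and share the hypotheses that $M$ validates the relevant axiom scheme and that the underlying frame is a point-generated $\diff{\LB}$-frame, all of which have been secured above, so the single frame $(\ff{Y},R_\sim,D_\sim)$ realizes both conclusions at once.
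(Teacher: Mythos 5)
Your proof is correct and follows essentially the same route as the paper: take a point-generated submodel of the canonical model (whose frame validates $\diff{\LB}$ by canonicity of the Sahlqvist axioms), form the minimal filtration through $\SubFrs{\vf}$, and use Lemma~\ref{lem:filtr-for-conn} with Proposition~\ref{prop:conn} (plus Lemma~\ref{lem:filtr-for-chrom} with Proposition~\ref{prop:chrom} for the second logic) to verify that the filtered frame validates the axioms. Your explicit observation that only truth of the axiom schemes in the model is needed, not canonicity of $\ACon$, is exactly the point of how the paper's lemmas are formulated.
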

\begin{proof}
Similar to the proof of Theorem \ref{thm:fmpCn}.
Let $\vf$ be a non-theorem of one these logics,
$M$ a point-generated submodel of the canonical model of the logic where $\vf$ is falsified.
Consider the frame $F$ of the minimal filtration of $M$ via the subformulas of $\vf$.
We only need to check that $F$ validates $\Di \top$ and $\ACon$ (validity of other axioms was checked in the proof of
Theorem \ref{thm:fmpCn}).
That $\Di \top$ is valid is trivial.
The validity of $\ACon$ follows from Lemma \ref{lem:filtr-for-conn} and
Proposition \ref{prop:conn}.
\qed
\end{proof}

\begin{theorem}\label{thm:chrom-conn-compl}
Let $\clC$  be the class of connected non-singleton graphs,
$\clC^{>k}$ the class of non-$k$-colorable graphs in $\clC$.
Then $\Log{\diff{\clC}}=\diff{\LB{}}+\{\ACon,\Di\top\}$,
and $\Log{\diff{\clC}^{>k}}=\diff{\LB{}}+\{\CF{k},\ACon,\Di\top\}$.
\end{theorem}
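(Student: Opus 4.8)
The plan is to establish the two claimed identities by a double inclusion, reducing everything to the completeness and soundness facts already in hand. For soundness ($\supseteq$), I would argue that every frame in $\diff{\clC}$ or $\diff{\clC}^{>k}$ validates the relevant axioms: a frame $\diff{G}=(X,R,\neq_X)$ is a point-generated $\diff{\LB}$-frame whenever $G$ is a graph (so $R$ is symmetric and $\neq_X\subseteq\neq_X$ trivially, via Proposition \ref{prop:point-gen-diff}); then Proposition \ref{prop:conn} gives $\diff{G}\mo\ACon$ when $G$ is connected, and Proposition \ref{prop:chrom} gives $\diff{G}\mo\CF{k}$ when $\C{G}>k$. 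Hence $\diff{\LB}+\ACon\subseteq\Log{\diff{\clC}}$ and likewise for the other logic. This direction is routine.

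The harder direction is completeness ($\subseteq$), and here I would lean on the finite model property proved in Theorem \ref{thm:fmpCnConn} together with the repairing construction from Section \ref{sec:prel}. Suppose $\vf\notin\diff{\LB}+\ACon$. By Theorem \ref{thm:fmpCnConn} there is a finite point-generated $\diff{\LB}$-frame $F=(X,R,D)$ refuting $\vf$ with $F\mo\ACon$; by Proposition \ref{prop:conn}, $(X,R)$ is connected. I would then pass to the repairing $\dunr{F}=(Y,S,\neq_Y)$, which is of the canonical form $(Y,S,\neq_Y)$ and p-morphically maps onto $F$, so $\dunr{F}$ also refutes $\vf$. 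It remains to see that $(Y,S)$ lies in the target class: since $R$ is symmetric so is $S$, and since $S$ is defined by $(x,i)S(y,j)$ iff $xRy$, connectedness of $(X,R)$ transfers to $(Y,S)$ — any path in $(X,R)$ lifts to a path in $(Y,S)$ through the $0$-copies, and each duplicate $(x,1)$ is $S$-linked to $(x,0)$ exactly when $xRx$; the one delicate point, that distinct copies of the same $x$ remain connected, follows because in a connected graph with more than one vertex every point has a neighbor, while an isolated single point has no duplicates. Thus $(Y,S)\in\clC$, giving $\Log{\diff{\clC}}\subseteq\diff{\LB}+\ACon$.

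For the chromatic version I would additionally invoke Lemma \ref{lem:filtr-for-chrom}, exactly as in Theorem \ref{thm:chrom-compl}: the argument there already shows that $\C{X,R}>k$ implies $\C{Y,S}>k$ for the repairing, and that computation is insensitive to the extra connectedness constraint. Combining this with the connectedness transfer just described yields $(Y,S)\in\clC^{>k}$, so every non-theorem of $\diff{\LB}+\{\CF{k},\ACon\}$ is refuted in $\diff{\clC}^{>k}$, giving the second identity. In effect the whole proof is a bookkeeping assembly of Theorems \ref{thm:fmpCn}–\ref{thm:fmpCnConn} and the repairing lemma, and I would likely write it as ``analogously to Theorem \ref{thm:chrom-compl}, now using Lemma \ref{lem:filtr-for-conn} and Proposition \ref{prop:conn}.''

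The main obstacle I anticipate is confirming that the repairing $\dunr{F}$ preserves connectedness without breaking it at duplicated reflexive points. One must check that adding a second copy $(x,1)$ of a $D$-reflexive point does not create a component disconnected from the rest: the copy $(x,1)$ is reachable in $(Y,S)$ precisely when $x$ had some $R$-neighbor (including itself), and in a connected graph on at least two vertices this is automatic, while the degenerate one-point case must be handled separately. I would make this explicit rather than hide it in ``clearly'', since it is the only place where the interaction between the repairing construction and the connectedness axiom is genuinely nontrivial.
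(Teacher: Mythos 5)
Your plan coincides with the paper's own proof: soundness from Propositions \ref{prop:chrom} and \ref{prop:conn}, completeness from the finite model property of Theorem \ref{thm:fmpCnConn}, and the passage from a finite point-generated frame $F=(X,R,D)$ of the logic to a genuine graph frame via the repairing $\dunr{F}=(Y,S,\neq_Y)$, with the chromatic part transferred exactly as in Theorem \ref{thm:chrom-compl}. You are also right that the only genuinely new point is whether $\dunr{F}$ stays connected, and right to distrust a one-line justification: the paper's own step (``a path between $x$ and $y$ in $(X,R)$ induces a path between $(x,i)$ and $(y,j)$ in $(Y,S)$'') says nothing when $x=y$ and $i\neq j$, since the trivial path induces nothing.

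However, your resolution of that point is incorrect, and the gap is real. Duplicates in $\dunr{F}$ are created for $D$-reflexive points, not for points with $R$-edges, so the claim ``an isolated single point has no duplicates'' is false. Consider $F_0=(\{x\},\emptyset,\{(x,x)\})$: it validates $\diff{\LB}$ (both relations are symmetric, $D$ union the diagonal is transitive, and $\emptyset=R\subseteq D$), and it validates $\ACon$ vacuously, because in $F_0$ the operator $\EE$ evaluates at the single point and so the premise $\EE p\wedge\EE\neg p$ is never true. Thus $F_0$ is a point-generated frame of $\diff{\LB}+\ACon$ with $(X,R)$ connected, yet $\dunr{F_0}$ is the two-point edgeless graph, which is disconnected. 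Worse, this degenerate case cannot be ``handled separately'' as you propose: $F_0$ refutes $\Did\top\imp\Di\top$, whereas this formula is valid in $\diff{G}$ for every connected graph $G$ (either $|X|=1$, and then $\Did\top$ fails everywhere, or every point has an $R$-neighbour). Hence $\Did\top\imp\Di\top$ lies in $\Log{\diff{\clC}}$ but not in $\diff{\LB}+\ACon$, so the inclusion $\Log{\diff{\clC}}\subseteq\diff{\LB}+\ACon$ cannot be obtained along this route; this is in fact a gap in the paper's own proof as well, affecting the first identity as stated. By contrast, your argument does go through for the second identity when $k\geq 1$: there $F$ also validates $\CF{k}$, so $\C{X,R}>k\geq 1$ by Proposition \ref{prop:chrom}; a one-point $X$ must then carry an $R$-loop, which gives the edge $(x,0)S(x,1)$ directly, and otherwise connectedness of $(X,R)$ supplies every point with a neighbour, linking both of its copies to the rest of $(Y,S)$. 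So you should either restrict your write-up to the case $k\geq 1$ of the second identity, or note explicitly that the first identity needs an extra axiom (or a different argument) to exclude frames like $F_0$.
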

\begin{proof}
Similar to the proof of Theorem \ref{thm:chrom-compl}.
Completeness of
$\diff{\LB{}}+\{\ACon,\Di\top\}$
and $\diff{\LB{}}+\{\CF{k},\ACon,\Di\top\}$ with respect to their finite point-generated frames
follows from Theorem \ref{thm:fmpCnConn}.

Assume that
$F=(X,R,D)$ is a point-generated $\diff{\LK{B}}$-frame, and $(X,R)$ is connected and validates $\Di \top$.
Consider the repairing $\dunr{F}=(Y,S,\neq_Y)$ of $F$. Clearly, $\Di\top$ is valid in $\dunr{F}$.
Let $(x,i)$ and $(y,j)$ be in $Y$. First, assume that $x\neq y$. Since $(X,R)$ is connected, there is a path between $x$ and $y$ in $(X,R)$, which
induces
a path between $(x,i)$ and $(y,j)$ in $(Y,S)$ by the definition of $S$.
Now consider two distinct points $(x,i)$ and $(x,j)$ in $Y$. Since $\Di\top$ is valid in $F$,
we have $xRy$ for some $y$ in $F$. Then we have $(x,i)S(y,0)$ and $(x,j)S(y,0)$.
It follows that
$(Y,S)$ is connected and so
$\dunr{F}$  validates $\ACon$ by Proposition \ref{prop:conn}.

That other axioms hold in $(Y,S,\neq_Y)$ was shown in Theorem \ref{thm:chrom-compl}.
Now the theorem follows from the fact that $F$ is a p-morphic image of $\dunr{F}$.
\qed
\end{proof}

\section{Corollaries}\label{sec:coroll}

Lemmas \ref{lem:filtr-for-chrom} and \ref{lem:filtr-for-conn} were stated in a more general way than it was required for the proofs of Theorems  \ref{thm:fmpCn} and  \ref{thm:fmpCnConn}.
The aim of using these, more technical, statements is the following.

\begin{definition}%\cite{AiML2020}
\normalfont
A logic $L$ \emph{admits (rooted) definable filtration},
if for any (point-generated) model $M$ with $M\mo L$,
and for any finite $\SubFrs$-closed set of formulas~$\Gamma$,
there exists a finite model $\ff{M}$ with $\ff{M}\mo L$ that is a definable $\Gamma$-filtration of~$M$.
\end{definition}

In \cite{KSZ:AiML:2014,KikotShapZolAiml2020}, it was shown that if a modal logic $L$ admits  definable filtration,
then its enrichments with modalities for the transitive closure and converse relations also admit
definable filtration.

Notice that if $L=\LK{}_2+\vf$, where $\LK{}_2$ is the smallest bimodal logic and $\vf$ is a bimodal formula,
then $M\mo L$ iff $M\mo \scheme{\vf}$.
In particular, the logics $\LK{}_2+\CF{k}$ admit definable filtration by Lemma \ref{lem:filtr-for-chrom}.
This fact immediately extends to any bimodal logic $L+\CF{k}$, whenever $L$ admits definable filtration.

\begin{corollary}
If a bimodal logic $L$ admits definable filtration, then
all $L+\CF{k}$ admit definable filtration, and consequently have the finite model property.
\end{corollary}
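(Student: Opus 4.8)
The plan is to unfold the definitions and reduce the corollary to two facts already in hand: Lemma~\ref{lem:filtr-for-chrom} (which controls the chromatic behavior of filtrations) and the hypothesis that $L$ admits definable filtration. First I would fix a model $M=(X,R,D,\v)$ with $M\mo L+\CF{k}$ and a finite $\SubFrs$-closed set $\Gamma$. Since $M\mo L$, the hypothesis on $L$ yields a finite model $\ff{M}=(X/{\sim_\Psi},\ff{R},\ff{D},\ff{\v})$ that is a definable $\Gamma$-filtration of $M$, say with $\sim_\Psi$ for some finite $\Psi\supseteq\Gamma$, and satisfies $\ff{M}\mo L$. The goal is to show that this same $\ff{M}$ also validates $\CF{k}$, so that $\ff{M}\mo L+\CF{k}$; then $\ff{M}$ is the desired definable $\Gamma$-filtration for the logic $L+\CF{k}$.

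The key step is verifying $\ff{M}\mo\CF{k}$. Because $M\mo L+\CF{k}$, in particular $M\mo\scheme{\CF{k}}$, so the hypotheses of Lemma~\ref{lem:filtr-for-chrom} are met (note that lemma deliberately does \emph{not} require $M$ to be a $\diff{\LK{}}$-frame). Applying it to the definable $\Gamma$-filtration $\ff{M}$ gives $\C{X/{\sim_\Psi},\ff{R}}>k$. I would then invoke Proposition~\ref{prop:chrom} to conclude $\ff{M}\mo\CF{k}$; but I must be careful, since that proposition is stated for \emph{point-generated} $\diff{\LK{}}$-frames, and here the underlying frame of $\ff{M}$ need not be of that form. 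The clean route is to observe that $\CF{k}$ is of the shape $\AA(\ldots)\imp\EE(\ldots)$, whose truth in a model depends only on the relation $R$ (via $\Di$) and on the $\EE/\AA$ quantifiers ranging over the whole domain; the argument in the proof of Proposition~\ref{prop:chrom}---the premise forces the $p_i$-values to partition the domain and the conclusion asserts this partition is improper---transfers directly to validity of $\CF{k}$ in any finite frame $(\ff{X},\ff{R},\ff{D})$ with $\C{\ff{X},\ff{R}}>k$, provided $\ff{D}$ interprets $\EE,\AA$ as genuine existential/universal quantifiers over the domain. I would therefore either note that $\ff{M}$ is point-generated (filtrations of point-generated models can be taken point-generated) so that Proposition~\ref{prop:point-gen-diff} gives ${\neq}\,\subseteq\ff{D}$ and Proposition~\ref{prop:chrom} applies, or extract the relevant half of that proposition's proof as a standalone observation.

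The main obstacle is precisely this mismatch of hypotheses between Lemma~\ref{lem:filtr-for-chrom}, which produces a purely combinatorial conclusion about $\ff{R}$ with no assumption on $\ff{D}$, and Proposition~\ref{prop:chrom}, which presupposes a point-generated $\diff{\LK{}}$-frame in order to read off validity of $\CF{k}$. Bridging it requires ensuring that $\EE$ and $\AA$ behave as true quantifiers over $\ff{X}$ in $\ff{M}$. If $L$ is an extension of $\diff{\LK{}}$ (the intended application), then $\ff{M}\mo L$ already forces $\ff{D}$ to interpret the difference-modality axioms, and point-generation of $\ff{M}$ secures ${\neq}\,\subseteq\ff{D}$ via Proposition~\ref{prop:point-gen-diff}, closing the gap. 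Once $\ff{M}\mo\CF{k}$ is established, the corollary follows immediately: $\ff{M}$ is a finite definable $\Gamma$-filtration of $M$ validating $L+\CF{k}$, which is the definition of admitting definable filtration; the finite model property is then the standard consequence, obtained by filtering, through the subformulas of a non-theorem, a point-generated submodel of the canonical model of $L+\CF{k}$.
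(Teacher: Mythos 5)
Your overall plan is the same as the paper's (take the definable $\Gamma$-filtration $\ff{M}\mo L$ supplied by the hypothesis, note that $M\mo L+\CF{k}$ amounts to $M\mo L$ together with $M\mo\scheme{\CF{k}}$, and then transfer the colorability axiom to $\ff{M}$), and you correctly isolate the delicate point: Lemma~\ref{lem:filtr-for-chrom} only bounds $\C{\ff{X},\ff{R}}$, whereas what is needed is $\ff{M}\mo\scheme{\CF{k}}$, which also depends on $\ff{D}$. However, neither of your two bridges across this gap works. First, the corollary is about an \emph{arbitrary} bimodal logic $L$ admitting definable filtration; the paper's own base case is $L=\LK{}_2$, which does not extend $\diff{\LK{}}$, so retreating to ``the intended application $L\supseteq\diff{\LK{}}$'' proves a strictly weaker statement and misses the base case (and, e.g., enrichments by transitive closure or converse modalities). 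Second, even when $L$ does extend $\diff{\LK{}}$, the step ``$\ff{M}\mo L$ forces $\ff{D}$ to interpret the difference axioms, and point-generation gives ${\neq}\subseteq\ff{D}$ via Proposition~\ref{prop:point-gen-diff}'' conflates truth in a model with validity in a frame: Propositions~\ref{prop:point-gen-diff} and~\ref{prop:chrom} apply to \emph{frames} validating $\diff{\LK{}}$, while $\ff{M}\mo L$ only says that all substitution instances of the axioms \eqref{eq:diff-axioms} are true in the model $\ff{M}$. That does not yield the correspondence properties: in a two-point model whose points satisfy exactly the same formulas, with $\ff{R}=\ff{D}=\emp$, every theorem of $\diff{\LK{}}$ is true, yet ${\neq}\not\subseteq\ff{D}$. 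This is not a spurious worry here, because distinct points of a definable filtration need not be separable by any formula evaluated in $\ff{M}$ (the filtration lemma transfers only formulas of $\Gamma$), so no definable-singleton argument can restore the frame condition; also, in the non-rooted notion of admitting definable filtration, $M$ need not be point-generated in the first place.

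The bridge that actually works---and this is what the paper's terse ``immediately extends'' has to mean---is to show that truth of the scheme $\scheme{\CF{k}}$ is preserved under every definable filtration by rerunning the \emph{proof} of Lemma~\ref{lem:filtr-for-chrom} relative to the evaluation point, rather than invoking its statement plus Proposition~\ref{prop:chrom}. Suppose $\ff{M},[x]\mo\AA\chi$, where $\chi=\bigvee_{i<k}(\psi_i\wedge\bigwedge_{j\neq i}\neg\psi_j)$ is the premise of an instance of $\CF{k}$. The sets $T_i$ of classes satisfying the $i$-th disjunct are pairwise disjoint and cover $\{[x]\}\cup\ff{D}([x])$; by definability of the filtration each union $\bigcup T_i\subseteq X$ is defined in $M$ by a formula $t_i$, as in \eqref{eq:vf-B}. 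Since the minimal filtered relation of $D$ is contained in $\ff{D}$, every $D$-successor $y$ of $x$ has $[y]\in\{[x]\}\cup\ff{D}([x])$, hence $M,x\mo\AA\bigvee_{i<k}(t_i\wedge\bigwedge_{j\neq i}\neg t_j)$. Applying $M\mo\scheme{\CF{k}}$ at $x$ with the $t_i$'s gives some $y$ (equal to $x$ or with $xDy$) and $z$ with $yRz$ such that $y,z\in\bigcup T_i$ for some $i$. Using minimality once more, $[y]$ is $[x]$ or a $\ff{D}$-successor of $[x]$, $[y]\,\ff{R}\,[z]$, and $[y],[z]\in T_i$, whence $\ff{M},[x]\mo\EE\bigvee_{i<k}(\psi_i\wedge\Di\psi_i)$. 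This argument uses nothing about $\ff{D}$ beyond the filtration conditions, so it covers arbitrary $L$; with it in place, the rest of your proof (closure of model truth under the inference rules, and the standard passage from admitting definable filtration to the finite model property) goes through.
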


Applying
Lemmas \ref{lem:filtr-for-chrom} and \ref{lem:filtr-for-conn}
to the case of point-generated models, we obtain the following version of Theorems  \ref{thm:fmpCn} and  \ref{thm:fmpCnConn}.
\begin{corollary}\label{cor:adf-weak}
Assume that a bimodal logic
$L$ admits rooted definable filtration, $k<\omega$.
Then $L+\CF{k}$ has the finite model property.
If also $L$ extends $\diff{\LK{B}}$, then $L+\{\CF{k},\ACon\}$ has the finite model property.
\end{corollary}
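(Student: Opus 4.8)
The plan is to re-run the proofs of Theorems~\ref{thm:fmpCn} and~\ref{thm:fmpCnConn}, feeding them the filtration supplied by the hypothesis in place of the minimal one. Fix a non-theorem $\vf$ of $L+\CF{k}$ (respectively of $L+\{\CF{k},\ACon\}$). By the canonical model theorem (Proposition~\ref{prop:k-canonical-model}) $\vf$ is refuted at some point $x$ of the canonical model, and I pass to the submodel $M=(X,R,D,\v)$ generated by $x$. Since every theorem, hence every substitution instance of $\CF{k}$ and of $\ACon$, is true throughout the canonical model and truth is preserved in generated submodels, $M$ is point-generated with $M\mo L$, $M\mo\scheme{\CF{k}}$, and, in the second case, $M\mo\scheme{\ACon}$.

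Next I would apply the rooted definable filtration of $L$ to $M$ with $\Gamma=\SubFrs{\vf}$, obtaining a finite model $\ff{M}=(\ff{X},\ff{R},\ff{D},\ff{\v})$ with $\ff{X}=X/{\sim_\Psi}$ for some finite $\Psi\supseteq\Gamma$, $\ff{M}\mo L$, and $\ff{M}$ a $\Gamma$-filtration of $M$; by the Filtration lemma (Proposition~\ref{prop:Filtration}) $\vf$ is still refuted in $\ff{M}$. The crucial bridge from the model to its frame is that, because distinct $\sim_\Psi$-classes are separated by $\Psi$-formulas, every subset of the finite set $\ff{X}$ is definable (as in the proof of Lemma~\ref{lem:filtr-for-chrom}). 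Consequently, since $L$ is closed under substitution, $\ff{M}\mo L$ already forces the frame $\ff{F}=(\ff{X},\ff{R},\ff{D})$ to validate $L$: an arbitrary valuation on $\ff{F}$ is realized by some substitution instance, which is true everywhere in $\ff{M}$. In particular $\ff{F}$ is point-generated, and, as $L$ extends $\diff{\LK{}}$ (in the second case $\diff{\LB}$), $\ff{F}$ validates $\diff{\LK{}}$; hence by Proposition~\ref{prop:point-gen-diff} we have $\neq_{\ff{X}}\,\subseteq\,\ff{D}$, and in the $\diff{\LB}$ case $\ff{R}$ is symmetric, so $\ff{F}$ is a point-generated $\diff{\LK{}}$-frame (a $\diff{\LB}$-frame in the second case). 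Now the two technical lemmas apply to $\ff{M}$: Lemma~\ref{lem:filtr-for-chrom} (using $M\mo\scheme{\CF{k}}$) gives $\C{\ff{X},\ff{R}}>k$, and, in the second case, Lemma~\ref{lem:filtr-for-conn} (using that $M$ is a point-generated $\diff{\LB}$-frame and $M\mo\scheme{\ACon}$) gives that $(\ff{X},\ff{R})$ is connected.

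It remains to convert these two combinatorial facts into validity of the axioms in $\ff{F}$. Since $\ff{F}$ is a point-generated $\diff{\LK{}}$-frame with $\C{\ff{X},\ff{R}}>k$, Proposition~\ref{prop:chrom} yields $\ff{F}\mo\CF{k}$; in the second case, as $\ff{F}$ is moreover a connected $\diff{\LB}$-frame, Proposition~\ref{prop:conn} yields $\ff{F}\mo\ACon$. Together with $\ff{F}\mo L$ this exhibits a finite frame validating the whole logic while refuting $\vf$, which is the finite model property. I expect the main difficulty to be the passage from models to frames in the second paragraph: one must notice that an arbitrary definable filtration, unlike the minimal one used in Theorems~\ref{thm:fmpCn} and~\ref{thm:fmpCnConn}, still validates $L$ on its frame and still carries the difference structure $\neq_{\ff{X}}\,\subseteq\,\ff{D}$ (and symmetry of $\ff{R}$) needed for Propositions~\ref{prop:chrom} and~\ref{prop:conn}; the definability of all subsets of $\ff{X}$ is exactly what makes this work, and it is also why the colorability reading of $\CF{k}$ presupposes that $L$ extends $\diff{\LK{}}$. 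Finally I would observe that only the plain finite model property is claimed, not the exponential bound of Theorems~\ref{thm:fmpCn} and~\ref{thm:fmpCnConn}, because the definable filtration may pass through a set $\Psi$ strictly larger than $\SubFrs{\vf}$, so the estimate $2^{\lng{\vf}}$ is no longer available.
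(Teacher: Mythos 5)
Your skeleton (rooted canonical submodel, the filtration supplied by the hypothesis, the Filtration lemma, then Lemmas~\ref{lem:filtr-for-chrom} and~\ref{lem:filtr-for-conn} and Propositions~\ref{prop:chrom} and~\ref{prop:conn}) is the intended one, but the step you yourself call the ``crucial bridge'' contains a genuine gap. The formulas $\psi_A$ from the proof of Lemma~\ref{lem:filtr-for-chrom} define the classes $A$ as subsets of $X$ \emph{in the model $M$}; your substitution argument needs formulas defining the corresponding subsets of $\ff{X}$ \emph{in the model $\ff{M}$}. The Filtration lemma transfers truth between $M$ and $\ff{M}$ only for formulas in $\Gamma=\SubFrs{\vf}$, whereas the hypothesis supplies a filtration with $\sim\;=\;\sim_\Psi$ for some finite $\Psi\supseteq\Gamma$ possibly strictly larger than $\Gamma$: clause 2 of the definition of filtration constrains $\ff{\v}$ only on variables in $\Gamma$, and the maximal-relation bound is also relative to $\Gamma$. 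So two classes separated only by formulas in $\Psi\setminus\Gamma$ may satisfy exactly the same formulas in $\ff{M}$ (take $\Gamma=\{p\}$, $\Psi=\{p,q\}$, and let $\ff{\v}(q)$ collapse the distinction); hence $\ff{M}$ need not be differentiated, no formula need realize a given valuation on $\ff{F}$ inside $\ff{M}$, and the inference from $\ff{M}\mo L$ to $\ff{F}\mo L$ is unjustified. The same flaw undermines your derivations of $\neq_{\ff{X}}\subseteq\ff{D}$ and of symmetry of $\ff{R}$, and symmetry can genuinely fail, since the hypothesis does not force $\ff{R}$ to be the minimal filtered relation. The standard repair is an extra step you omit: pass from $\ff{M}$ to its quotient by logical equivalence with respect to \emph{all} formulas, with minimal filtered relations. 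This finite model satisfies pointwise the same formulas as $\ff{M}$ (so it is a model of $L$ still refuting $\vf$), it \emph{is} differentiated, so its frame validates $L$ (in particular, when $L\supseteq\diff{\LK{B}}$, the relations are symmetric and contain inequality), and chromatic number $>k$ and connectedness survive the further quotient because the quotient map sends edges to edges, so proper partitions pull back and paths push forward. (Alternatively, $\neq_{\ff{X}}\subseteq\ff{D}$ can be obtained without frame validity of $L$: by canonicity $D\supseteq\neq_X$ in $M$, and $\ff{D}$ contains the minimal filtered relation.)

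A second gap concerns scope: the first claim of the corollary assumes only that $L$ admits rooted definable filtration --- it does \emph{not} assume $L\supseteq\diff{\LK{}}$ --- yet your proof of that claim explicitly invokes ``as $L$ extends $\diff{\LK{}}$'' to get $\neq_{\ff{X}}\subseteq\ff{D}$ and then apply Proposition~\ref{prop:chrom}. Without some control of the second relation, $\C{\ff{X},\ff{R}}>k$ says nothing about validity of $\CF{k}$: if, say, $\ff{D}=\emp$, then $\CF{k}$ fails at any $R$-irreflexive point regardless of the chromatic number. To prove the first claim in its stated generality one needs the model-level fact implicit in the \emph{proof} (not the statement) of Lemma~\ref{lem:filtr-for-chrom}: a definable $\Gamma$-filtration of a model of $\scheme{\CF{k}}$ is again a model of $\scheme{\CF{k}}$ (pull the premise of an instance back from $\ff{M}$ to $M$ through the defining formulas, apply $M\mo\scheme{\CF{k}}$, and push the conclusion forward along the minimal relations); combined with the differentiated quotient above, this yields a finite frame for $L+\CF{k}$. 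Your closing observation that only the plain, not exponential, finite model property can be claimed is correct.
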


\later{
Notice that this corollary does not imply the exponential model property.
It does not imply the admits definable filtration property as well.
}

\section{Discussion}\label{sec:concl}
We have shown that modal logics of different classes of non-$k$-colorable graphs are decidable.
It is of definite interest to consider logics of certain graphs, for which the chromatic number is unknown.

Let $F=(\mathbb{R}^2, R_{=1})$ be the unit distance graph of the real plane.
It is a long-standing open problem what is $\C{F}$ (Hadwiger–Nelson problem). It is known that $5\leq \C{F}\leq 7$ \cite{de2018chromatic},\cite{Exoo2020}.\todo{Check later; De2018 - Combinatorics? }

Let $L_{=1}$ be the bimodal logic of the
frame $(\mathbb{R}^2, R_{=1},\neq_{\mathbb{R}^2})$.
In modal terms, the problem asks whether $\CF{5}, \CF{6}$ belong to $L_{=1}$.
We know that $L_{=1}$ extends $L=\diff{\LK{B}}+\{\CF{4}, \ACon, \Di \top, \Di p\imp\Did p\}$ (it is an easy corollary of the above results that $L$ is decidable).
However, $L_{=1}$ contains extra formulas. For example, consider the formulas
$$
\PF(k,m,n)=\bigwedge_{i<k} \Di^m \Box^n p_i \imp \bigvee_{i\neq j<k} \Di^m (p_i \con p_j).
$$
For various $k,m,n$, $\PF(k,m,n)$ is in $L_{=1}$ (and not in $L$); this can be obtained from known solutions for problems of packing equal circles in a circle.

\begin{problem}\label{problem:1}
Is $L_{=1}$ decidable? Finitely axiomatizable? Recursively enumerable? Does it have the finite model property?
\end{problem}

Notice that instead of considering the difference auxiliary modality, one can consider the logic with the universal modality: this logic is a fragment of $L_{=1}$, but still can express formulas  $\CF{k}$.

Let $V_r\subseteq \mathbb{R}^2$ be a disk of radius $r$. It follows from de Bruijn–Erdős theorem,
that if $\C{F}>k$, then $\C{V_r,R_{=1}}>k$ for some $r$.
%The chromatic number of $F$ is the minimum of chromatic numbers of finite disks on the plane.

Let $L_{=1,r}$ be the unimodal logic of the frame $(V_r, R_{=1})$. If $r>1$, then the universal modality is expressible, and so are the formulas $\CF{k}$. Hence, it is of interest to consider axiomatization problems and algorithmic problems for these logics.
\begin{problem}
To analyze the unimodal logics $L_{=1,r}$.
\end{problem}

\section{Acknowledgements}
The author would like to thank the reviewers for their helpful comments on an earlier version of the paper.

%
% ---- Bibliography ----
%
% BibTeX users should specify bibliography style 'splncs04'.
% References will then be sorted and formatted in the correct style.
%

%For citations of references, we prefer the use of square brackets
%and consecutive numbers. Citations using labels or the author/year
%convention are also acceptable.
\bibliographystyle{alpha}
\bibliography{chromatic}
\end{document}